\documentclass[a4paper,12pt]{article}
%%%%%%%%%%%%%%%%%%%%%%%%%%%%%%%%%%
\newcommand{\slim}{\mbox{s-}\lim}
\newcommand{\wlim}{\mbox{w-}\lim}
%\usepackage[color]{showkeys}
%%%%%%%%%%%%%%%%%%%%%%%%%%%%%%%%%%%
\usepackage{amsmath,amssymb,mathrsfs}
\usepackage[dvips]{graphicx}
\usepackage{amsthm,bm,%ascmac,%picins,
color}
%\usepackage[dvipdfm]{graphicx}

%%%%%%%%%%%%%caption%%%%%%%%%%%%%%%%%%%%%%%%
\makeatletter
\def\figcaption{\def\@captype{figure}\caption}
\makeatother
%%%%%%%%%%%%%%%%%%%%%%%%%%%%%%%%%%%%%%%%%%%%%%%%%%

\makeatletter
 
  \@addtoreset{equation}{section}
 \makeatother

\newtheorem{theorem}{\bf Theorem}[section]
\newtheorem{lemma}{\bf Lemma}[section]
\newtheorem{proposition}{\bf Proposition}[section]
\newtheorem{corollary}[theorem]{\bf Corollary}

\newtheorem{example}{\bf Example}[section]

%%%%%%%%%%%%%%% title and author information %%%%%%%%%%%%%%%%%%%%%%%%%%%%%%%%%%%%%%%%%%%%%%%%%%%%%%%%%%%%%%%%%
\title{Asymptotic velocity of \\
a position-dependent quantum walk}
\author{
Akito Suzuki
\thanks{Division of Mathematics and Physics, 
Faculty of Engineering, Shinshu University, Wakasato, Nagano 380-8553, Japan, 
e-mail: akito@shinshu-u.ac.jp
	}
}
\begin{document}

\maketitle

%%%%%%%%%%%%%% abstract %%%%%%%%%%%%%%%%%%%%%%%%%%%%%%%%%%%%%%%%%%%%%%%%%%%%%%%%%%%%%%%%%%%%%%%
\begin{abstract}
We consider 
a position-dependent coined quantum walk on $\mathbb{Z}$
and assume that the coin operator $C(x)$ satisfies
\[ \|C(x) - C_0 \| \leq c_1|x|^{-1-\epsilon},
	\quad x \in \mathbb{Z}\setminus \{0\} \]
with positive $c_1$ and $\epsilon$ and $C_0 \in U(2)$. 
We show that
the Heisenberg operator $\hat x(t)$ of the position operator 
converges to the asymptotic velocity operator $\hat v_+$
so that
\[ \slim_{t \to \infty} {\rm exp}\left( i \xi \frac{\hat x(t)}{t} \right)
	= \Pi_{\rm p}(U) + {\rm exp}(i \xi \hat v_+) \Pi_{\rm ac}(U) \]
provided that $U$ has no singular continuous spectrum. 
Here $\Pi_{\rm p}(U)$ (resp. $\Pi_{\rm ac}(U)$) is the orthogonal projection onto
the direct sum of all 
eigenspaces (resp. the subspace of absolute continuity) of $U$.
We also prove that 
for the random variable $X_t$ denoting the position of a quantum walker
at time $t \in \mathbb{N}$,
$X_t/t$ converges in law to a random variable $V$
with the probability distribution
\[ \mu_V = \|\Pi_{\rm p}(U)\Psi_0\|^2\delta_0
	+ \|E_{\hat v_+}(\cdot) \Pi_{\rm ac}(U)\Psi_0\|^2, \]
where $\Psi_0$ is the initial state, 
$\delta_0$ the Dirac measure at zero, and $E_{\hat v_+}$ the spectral measure of $\hat v_+$.

\end{abstract}
%%%%%%%%%%%%%%%%%%%%%%%%%%%%
%\tableofcontents
%%%%%%%%%%%%%% body %%%%INTRODUCTION%%%%%%%%%%%%%%%%%%%%%%%%%%%%%%%%%%%%%%%%%%%%%%%%%%%%%%%%%%%%%%%%%%%%%%%
\section{Introduction}
%%%%%%%%%%%%%%%%%%%%%%%%%%%%%%%%%%%%%%%
The weak limit theorems for discrete time quantum walks
have been studied in various models (for reviews, see \cite{Ko08, VA15}).
In his papers \cite{Ko02, Ko05},
Konno first proved the weak limit theorem 
for a position-independent quantum walk on $\mathbb{Z}$.
%Using the Fourier analysis,
Grimmett et al \cite{GrJaSc04} simplified the proof 
and extended the result to higher dimensions.
For positon-dependent qunatum walks on $\mathbb{Z}$,
the weak limit theorems were obtained by
Konno et al \cite{KoLuSe13}, 
Endo and Konno \cite{EnKo14}, 
and Endo et al \cite{Enetal14}.

We consider a position-dependent quantum walk on $\mathbb{Z}$
given by a unitary evolution operator $U$:
\[ (U\Psi)(x) = P(x+1)\Psi(x+1) + Q(x-1)\Psi(x-1), \quad x \in \mathbb{Z}, \]
where $\Psi$ is a state vector in the Hilbert space
$\mathcal{H} = \ell^2(\mathbb{Z};\mathbb{C}^2)$
of states and
\[ P(x) = \begin{pmatrix} a(x) & b(x) \\ 0 & 0 \end{pmatrix},
	\quad Q(x) = \begin{pmatrix} 0 & 0 \\ c(x) & d(x) \end{pmatrix} . \]
Let $C(x) = P(x) + Q(x) \in U(2)$ and $S$ be a shift operator
such that $U = SC$.
Suppose that
there exists  a unitary matrix $C_0 = P_0 + Q_0 \in U(2)$ such that
\begin{equation}
\label{Eq_decay}
\|C(x) - C_0\| \leq c_1 |x|^{-1-\epsilon},
	\quad x \in \mathbb{Z} \setminus \{0\} 
\end{equation}
with positive $c_1$ and $\epsilon$ independent of $x$.
Here $\|M\|$ stands for the operator norm of a matrix $M \in M_2(\mathbb{C})$.
A typical example is the quantum walks with one defect 
\cite{Ca12, Ko10, KoLuSe13,W013},
which clearly satisfies \eqref{Eq_decay}.
We note that 
the condition \eqref{Eq_decay} allows not only finite 
but also infinite defects,
whereas the models introduced in \cite{EnKo14, Enetal14}
do not satisfy \eqref{Eq_decay}.
The unitary operator $U_0 = SC_0$ also defines an evolution 
of a position-independent quantum walk on $\mathbb{Z}$
and satisfies 
\[ (U_0\Psi)(x) = P_0 \Psi(x+1) + Q_0\Psi(x-1), \quad x \in \mathbb{Z} \]
with $C_0 = P_0 + Q_0$.
Let $\hat x$ be the position operator defined by
$(\hat x \Psi)(x) = x \Psi(x), \quad x \in \mathbb{Z}$. 
and $\hat x_0(t) = U_0^{-t} \hat x U_0^t$ 
the Heisenberg operator of $\hat x$ at time $t \in \mathbb{N}$
with the evolution $U_0$.
In \cite{GrJaSc04},
Grimmett et al essentially proved that the operator $\hat x_0(t)/t$ 
weakly converges  to the asymptotic velocity operator $\hat v_0$
so that
%%%%%%%%%%%%%%%%
\begin{equation}
\label{w_limit_x0} 
\wlim_{t \to \infty} {\rm exp} \left( i \xi \frac{\hat x_0(t)}{t} \right)
	= {\rm exp} \left({i \xi \hat v_0} \right),
	\quad \xi \in \mathbb{R}. 
\end{equation}
%%%%%%%%%%%%%%%%%%
Let $X^{(0)}_t$ be the random variable denoting the position of a quantum walker
at time $t \in \mathbb{N}$ with the evolution operator $U_0$. 
Then, the characteristic function of $X^{(0)}_t/t$ is given by
\[ \mathbb{E}(e^{i \xi X^{(0)}_t/t}) = \langle \Psi_0, e^{i\xi \hat x_0(t)/t} \Psi_0 \rangle, 
	\quad \xi \in \mathbb{R}, \]
where $\Psi_0$ is the initial state of the quantum walker. 
Hence, \eqref{w_limit_x0} means that
the random variable  $X^{(0)}_t/t$ converges in law to 
a random variable $V_0$,
which represents the linear spreading of the quantum walk: $X^{(0)}_t \sim t V_0$.
%the characteristic function of $X_t/t$ converges to 
%the characteristic function of a random variable $V_0$:
%\[ \lim_{t \to \infty} \mathbb{E}(e^{i \xi X_t/t}) 
%	= \mathbb{E}(e^{i \xi V_0}),
%		\quad \xi \in \mathbb{R}.  \]

In this paper, we derive the asymptotic velocity $\hat v_+$
for the Heisenberg operator $\hat x(t) = U^{-t} \hat x U^t$
with the evolution $U$ of the position-dependent quantum walk.
The decaying condition \eqref{Eq_decay}
implies that $U-U_0$ is a trace class operator
and allows us to prove the existence and completeness of 
the wave operator
\[ W_+ =  \slim_{t \to \infty} U^{-t}U_0^t\Pi_{\rm ac}(U_0) \]
using a discrete analogue of the Kato--Rosenblum Theorem
(See \cite{Si} for details),
where $\Pi_{\rm ac}(U_0)$ is the orthogonal projection onto 
the subspace of absolute continuity of $U_0$.
We also prove that
\begin{equation*}
%\label{s_limit_x0} 
\slim_{t \to \infty} {\rm exp} \left( i \xi \frac{\hat x_0(t)}{t} \right)
	= {\rm exp} \left({i \xi \hat v_0} \right),
	\quad \xi \in \mathbb{R} 
\end{equation*}
under a reasonable condition,
which is essentially the same as that of \cite{GrJaSc04}.
Furthermore, we assume that
$U$ has no singular continuous spectrum.
Then, we prove that
\begin{equation}
\label{eq_20151023} 
\slim_{t \to \infty} {\rm exp}\left( i \xi \frac{\hat x(t)}{t} \right)
	= \Pi_{\rm p}(U) + {\rm exp}(i \xi \hat v_+) \Pi_{\rm ac}(U), 
\end{equation}
where $\Pi_{\rm p}(U)$ is the orthogonal projection onto
the direct sum of all eigenspaces of $U$ and $\hat v_+ = W_+ \hat v_0 W_+^*$.
We believe that the absence of a singular continuous spectrum
can be checked with a concrete example
such as the one-defect model.
As a consequence of \eqref{eq_20151023},
we have the following weak limit theorem.
Let $X_t$ be the random variable denoting the position of a quantum walker
at time $t \in \mathbb{N}$
with the evolution operator $U$ and the initial state $\Psi_0$.
We prove that $X_t/t$ converges in law to a random variable $V$
with a probability distribution
\[ \mu_V = \|\Pi_{\rm p}(U)\Psi_0\|^2\delta_0
	+ \|E_{\hat v_+}(\cdot) \Pi_{\rm ac}(U)\Psi_0\|^2, \]
where $\delta_0$ is the Dirac measure at zero and 
$E_{\hat v_+}$ the spectral measure of $\hat v_+$.

The remainder of this paper is organized as follows.
In Section 2, we present the precise definition of the model
and our results.
Section 3 is devoted to the proof of the existence and completeness
of the wave operator.
In Section 4, we construct the asymptotic velocity.

%%%%%%%%%%%%%%%%%%%%%%%%%%%%%%%%%%%%%
\section{Definition of the model}
%%%%%%%%%%%%%%%%%%%%%%%%%%%%%%%%%%%%
Let $\mathcal{H} = \ell^2(\mathbb{Z};\mathbb{C}^2)$ be the Hilbert space of 
the square-summable functions $\Psi:\mathbb{Z} \to \mathbb{C}^2$.
%with the norm
%\[ \|\Psi\|_\mathcal{H}
%	= \left(\sum_{x \in \mathbb{Z}} \|\Psi(x)\|_{\mathbb{C}^2}^2 \right)^{1/2} \]
We define a shift operator $S$
and a coin operator $C$ on $\mathcal{H}$ as follows.
For a vector
$\Psi = \begin{pmatrix} \Psi^{(0)} \\ \Psi^{(1)} \end{pmatrix} \in \mathcal{H}$,
$S\Psi$ is given by
\[ (S\Psi)(x) = \begin{pmatrix} \Psi^{(0)}(x+1) \\ \Psi^{(1)}(x-1) \end{pmatrix},
	\quad x \in \mathbb{Z}. \]
Let $\{C(x)\}_{x \in \mathbb{Z}} \subset U(2)$ be a family of unitary matrices
with
\[ C(x) = \begin{pmatrix} a(x) & b(x) \\ c(x) & d(x) \end{pmatrix}. \]
$C\Psi$ is given by
\[ (C \Psi)(x) = C(x) \Psi(x), \quad x \in \mathbb{Z}. \]
We define an evolution operator as $U = SC$.
$U$ satisfies
\[ (U\Psi)(x) = P(x+1)\Psi(x+1) + Q(x-1)\Psi(x-1), \quad x \in \mathbb{Z} \]
with
\[ P(x) = \begin{pmatrix} a(x) & b(x) \\ 0 & 0 \end{pmatrix},
	\quad Q(x) = \begin{pmatrix} 0 & 0 \\ c(x) & d(x) \end{pmatrix}. \]

For a matrix $M \in M(2,\mathbb{C})$,
we use $\|M\|$ to denote the operator norm in $\mathbb{C}^2$: 
$\|M\| = \sup_{\|{\bm x}\|_{\mathbb{C}^2}=1} \|M{\bm x}\|_{\mathbb{C}^2}$.
%%%%Assumption%%%%%%%%%%%%%%%%%%%%%%%%%
We suppose that:
\begin{itemize}
\item[(A.1)]
There exists a unitary matrix 
$C_0 = \begin{pmatrix} a_0 & b_0 \\ c_0 & d_0 \end{pmatrix}\in U(2)$
such that
\[ \|C(x) - C_0\| \leq c_1|x|^{-1-\epsilon},
	\quad x \in \mathbb{Z} \setminus \{0\} \]
with some positive $c_1$ and $\epsilon$ independent of $x$.
\end{itemize}
%%%%%%%%%%%%%%%%%%%%%%%%%%%%%%%%%%%%%%%%%%%%%%%%
%Henceforth, we simply write the multiplication operator on $\mathcal{H}$ 
%by a constant matrix $M \in M(2,\mathbb{C})$ as $M$:
%For $\Psi \in \mathcal{H}$, $M\Psi$ is given by
%\[ (M\Psi)(x) = M\Psi(x), \quad x \in \mathbb{Z}. \]
We denote by $\mathscr{T}_1$ the set of trace class operators.
%%%Lemma%%%%%%%%%%%%%%%%%%%%%%%%%%%%
\begin{lemma}
\label{lemT1}
{\rm
Let $U$ satisfy (A.1) and set $U_0 = SC_0$.
Then, $U - U_0 \in \mathscr{T}_1$. 
}
\end{lemma}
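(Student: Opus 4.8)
The plan is to show that $U - U_0 = S(C - C_0)$, so since $S$ is unitary it suffices to prove that the coin difference $D := C - C_0$ is trace class. The operator $D$ acts diagonally in the position variable: $(D\Psi)(x) = (C(x) - C_0)\Psi(x)$. Writing $\mathcal{H} = \ell^2(\mathbb{Z};\mathbb{C}^2) = \bigoplus_{x \in \mathbb{Z}} \mathbb{C}^2$, the operator $D$ is the direct sum $\bigoplus_{x} D_x$ with $D_x = C(x) - C_0 \in M(2,\mathbb{C})$. For such a block-diagonal operator the trace norm is additive over the blocks, so I would use $\|D\|_{\mathscr{T}_1} = \sum_{x \in \mathbb{Z}} \|D_x\|_{\mathscr{T}_1}$, where on the right $\|\cdot\|_{\mathscr{T}_1}$ is the trace norm of a $2\times 2$ matrix (sum of singular values).

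Next I would bound each block's trace norm by its operator norm up to a dimensional constant: for any $M \in M(2,\mathbb{C})$ one has $\|M\|_{\mathscr{T}_1} \leq 2\|M\|$ (there are at most two nonzero singular values, each $\leq \|M\|$). Applying this with $M = D_x = C(x) - C_0$ and invoking hypothesis (A.1),
\[
\|D\|_{\mathscr{T}_1} = \sum_{x \in \mathbb{Z}} \|C(x) - C_0\|_{\mathscr{T}_1}
\leq 2\sum_{x \in \mathbb{Z}} \|C(x) - C_0\|
\leq 2\|C(0) - C_0\| + 2c_1 \sum_{x \neq 0} |x|^{-1-\epsilon}.
\]
The $x = 0$ term is a single finite number (the matrices are unitary, so $\|C(0) - C_0\| \leq 2$ in any case), and the remaining series $\sum_{x \neq 0} |x|^{-1-\epsilon} = 2\sum_{n=1}^{\infty} n^{-1-\epsilon} = 2\zeta(1+\epsilon) < \infty$ converges precisely because $\epsilon > 0$. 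Hence $D \in \mathscr{T}_1$, and therefore $U - U_0 = SD \in \mathscr{T}_1$ since $\mathscr{T}_1$ is a two-sided ideal in the bounded operators.

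There is no serious obstacle here; the only point requiring a word of care is the justification that the trace norm of a block-diagonal operator equals the sum of the trace norms of the blocks — this follows because one can write $D = \bigoplus_x U_x |D_x|$ via the blockwise polar decompositions, exhibiting a polar decomposition of $D$ with $|D| = \bigoplus_x |D_x|$, and then $\|D\|_{\mathscr{T}_1} = \operatorname{tr}|D| = \sum_x \operatorname{tr}|D_x| = \sum_x \|D_x\|_{\mathscr{T}_1}$. Alternatively, and perhaps more cleanly for the write-up, one can bypass trace-norm bookkeeping entirely: expand $D_x = \sum_{i,j} (D_x)_{ij}\, e_i e_j^{*}$ in the standard basis of $\mathbb{C}^2$, so that $D = \sum_{x}\sum_{i,j} (C(x)-C_0)_{ij}\, |x,i\rangle\langle x,j|$ is an absolutely convergent (in trace norm) sum of rank-one operators, each of trace norm $|(C(x)-C_0)_{ij}| \leq \|C(x)-C_0\|$, and the total is $\leq 4\sum_x \|C(x)-C_0\| < \infty$ by (A.1); completeness of $\mathscr{T}_1$ then gives $D \in \mathscr{T}_1$ directly.
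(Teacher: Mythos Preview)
Your proof is correct and follows essentially the same route as the paper: factor $U-U_0 = S(C-C_0)$, exploit the block-diagonal structure of $C-C_0$ over $\bigoplus_x \mathbb{C}^2$, bound the singular values of each $2\times 2$ block by its operator norm, and sum using (A.1). The paper carries this out by explicitly diagonalizing $T(x)^*T(x)$ to exhibit the singular values $t_i(x)^{1/2}$ and then summing $\operatorname{Tr}|T| = \sum_{x,i} t_i(x)^{1/2}$, whereas you package the same computation as $\|D\|_{\mathscr{T}_1} = \sum_x \|D_x\|_{\mathscr{T}_1} \leq 2\sum_x \|D_x\|$; these are the same argument in different notation. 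Your explicit handling of the $x=0$ term is a small improvement in clarity over the paper, which writes $\sum_{x\in\mathbb{Z}}|x|^{-1-\epsilon}$ without isolating that term.
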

%%%%%%%%%%%%%%%%%%%%%%%%%%%%%%%%%%%%
\begin{proof}
Let $T = U-U_0$ and $T(x) = C(x) - C_0$. Then 
\begin{equation}
\label{T*T} 
T^*T = (C-C_0)^*(C-C_0)
\end{equation}
is the multiplication operator by the matrix-valued function $T(x)^*T(x)$.
Let $t_i(x)$ ($i=1,2$) be the eigenvalues 
of the Hermitian matrix $T(x)^*T(x) \in M(2,\mathbb{C})$
and take an orthonormal basis (ONB) $\{ \tau_i(x)\}_{i=1,2}$ of corresponding eigenvectors
for all $x \in \mathbb{Z}$.
We use $|\xi \rangle \langle \eta|$ to denote 
the operator on $\mathcal{H}$ defined by 
$|\xi \rangle \langle \eta|\Psi = \langle \eta, \Psi \rangle \xi$.
Then, we have
\begin{equation} 
T^* T 
= \sum_{i=1,2} \sum_{x \in \mathbb{Z}} 
	t_i(x) |\tau_{i,x} \rangle \langle \tau_{i,x}|,
\end{equation}
where $\{\tau_{i,x}\}$ is the ONB given by 
\[ \tau_{i,x}(y) = \delta_{xy} \tau_i(x), \quad y \in \mathbb{Z}. \]
Since $T^*(x)T(x) \geq 0$, we have $t_i(x) \geq 0$.
By (A.1), we know that
\[ \max_{i=1,2} t_i(x) \leq c_1^2 |x|^{-2-2\epsilon}. \]
Hence, we have
\[ {\rm Tr}|T| 
	= \sum_{x \in \mathbb{Z}} \sum_{i=1,2} t_i(x)^{1/2}
	\leq  2 c_1 \sum_{x \in \mathbb{Z}} |x|^{-1-\epsilon} 
	< \infty, \]
which means that $T \in \mathscr{T}_1$.
Since $\mathscr{T}_1$ is an ideal,
$U-U_0 = ST \in \mathscr{T}_1$. 
\end{proof}
%%%Example%%%%%%%%%%%%%%%%%%%%%%%%%%%%%%%
\begin{example}[one-defect model]
{\rm
Let $C_0, C_0^\prime \in U(2)$ be unitary matrices with $C_0 \not= C_0^\prime$
and set
\[ C(x) = \begin{cases} C_0^\prime, & x = 0 \\ 
			C_0, & x \not=0. \end{cases} \]
$U = SC$ satisfies (A.1),
because $C(x) - C_0 = 0$ if $x \not= 0$.
}
\end{example}
%%%Example%%%%%%%%%%%%%%%%%%%%%%%%%%%%%%%
\begin{example}
{\rm
Let $C_0 \in U(2)$ be a unitary matrix and
$\{C(x)\} \subset U(2)$ a family of unitary matrices.
Assume that
\[ \max_{i,j} |(C(x) - C_0)_{ij}| \leq c_1|x|^{-1-\epsilon}, 
	\quad x \in \mathbb{Z} \setminus \{0\}, \]
where $M_{ij}$ denotes the $ij$-component of a matrix $M$.
Then, $U = SC$ satisfies (A.1),
because all norm on a finite dimensional vector space are equivalent.
}
\end{example}
%%%%%%%%%%%%%%%%%%%%%%%%%%%%%%%%%%%%%%%%%
We prove the following theorem in Section 3
using a discrete analogue of the Kato--Rosenblum theorem.
%%Theorem%%%%%%%%%%%%%%%%%%%%%%%%%%%%%%%%%%%%%%%
\begin{theorem}
\label{thm01}
{\rm
Let $U$ and $U_0$ be as above and assume that (A.1) holds. 
Then
\[ W_+ = \slim_{t \to \infty} U^{-t} U_0^t \Pi_{\rm ac}(U_0) \]
exists and is complete.
}
\end{theorem}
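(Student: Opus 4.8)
The plan is to reduce the statement to a standard discrete-time scattering result---the Kato--Rosenblum theorem for unitary operators---whose hypothesis is exactly the trace-class property established in Lemma \ref{lemT1}. Concretely, one invokes the theorem in the form: if $U$ and $U_0$ are unitary operators on a Hilbert space $\mathcal{H}$ with $U - U_0 \in \mathscr{T}_1$, then the wave operators $W_\pm = \slim_{t \to \infty} U^{\mp t} U_0^{\pm t} \Pi_{\rm ac}(U_0)$ exist, are partial isometries with initial space $\Pi_{\rm ac}(U_0)\mathcal{H}$ and final space $\Pi_{\rm ac}(U)\mathcal{H}$, and intertwine $U_0$ and $U$; in particular $W_+$ is complete. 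Since Lemma \ref{lemT1} already gives $U - U_0 = ST \in \mathscr{T}_1$, the theorem follows once the hypotheses of the abstract statement are verified, which is immediate.

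The key steps, in order, are as follows. First, recall the precise statement of the discrete Kato--Rosenblum theorem from \cite{Si} (the unitary analogue of the continuous-time version; one may phrase it via the Cayley transform, or directly, using that $\{U_0^t\}_{t \in \mathbb{Z}}$ is a strongly continuous one-parameter unitary group indexed by $\mathbb{Z}$). Second, apply Lemma \ref{lemT1} to conclude $U - U_0 \in \mathscr{T}_1$. Third, deduce existence of the strong limit defining $W_+$ on the range of $\Pi_{\rm ac}(U_0)$, and observe that $W_+$ vanishes on the orthogonal complement by construction, so $W_+$ is a well-defined partial isometry on all of $\mathcal{H}$. Fourth, invoke the completeness half of the theorem: $W_+$ is complete, i.e.\ $\operatorname{Ran} W_+ = \Pi_{\rm ac}(U)\mathcal{H}$; equivalently, the backward wave operator $\slim_{t\to\infty} U_0^{-t} U^t \Pi_{\rm ac}(U)$ also exists, and the two are mutual adjoints. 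Fifth (optional but worth stating), note the intertwining relation $W_+ U_0 = U W_+$, which will be used later to define $\hat v_+ = W_+ \hat v_0 W_+^*$.

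The main obstacle is not analytic but expository: one must make sure the cited abstract theorem is stated in a form that genuinely covers \emph{unitary} rather than \emph{self-adjoint} pairs, and that the indexing by $t \in \mathbb{N}$ (as opposed to $t \in \mathbb{Z}$ or $t \in \mathbb{R}$) causes no trouble. For the unitary case this is classical and can be found in \cite{Si}; alternatively one can transfer the self-adjoint Kato--Rosenblum theorem through the Cayley transform $A = i(I+U)(I-U)^{-1}$, checking that the trace-class difference is preserved under this transform (which follows from the resolvent identity together with $U - U_0 \in \mathscr{T}_1$, using that $\mathscr{T}_1$ is an ideal and that the relevant resolvents are bounded). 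Since all of this is standard, the proof is short; the real content of the paper lies in what is built on top of this theorem in Sections 3 and 4.
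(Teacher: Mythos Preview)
Your proposal is correct and follows essentially the same reduction as the paper: invoke Lemma~\ref{lemT1} to get $U-U_0\in\mathscr{T}_1$, apply the abstract unitary Kato--Rosenblum statement to obtain existence of $W_+$, and then swap the roles of $U$ and $U_0$ to get the reverse wave operator and hence completeness. The only difference is that the paper does not cite the unitary version as a black box or pass through the Cayley transform; instead it states the needed abstract result as Proposition~\ref{propWave} and supplies a self-contained direct proof (Lemmas~\ref{lemdense}--\ref{lemZ}) adapted from the self-adjoint argument in \cite{Si}.
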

%%%%%%%%%%%%%%%%%%%%%%%%%%%%%%%%%%%%%%%%%%%%

In what follows,
we introduce the asymptotic velocity $\hat v_0$, obtained first in \cite{GrJaSc04},
of the quantum walk with the evolution $U_0$ as follows.
Let 
\[ \hat U_0(k) = \begin{pmatrix} e^{ik} & 0 \\ 0 & e^{-ik} \end{pmatrix} C_0,
	\quad k \in [0,2\pi). \]
Since $\hat U_0(k) \in U(2)$,
$\hat U_0(k)$ is represented as
\[ \hat U_0(k) = \sum_{i=1,2}
	\lambda_i(k) |u_j(k) \rangle \langle u_j(k)|, \]
where $\lambda_j(k)$ is an eigenvalue of $\hat U_0(k)$
and $u_j(k)$ is the corresponding eigenvector
with $\|u_j(k)\|=1$.
The function $k \mapsto e^{ik}$ is analytic,
and so is $\lambda_j(k)$.
We need the following assumption on $u_j(k)$:
%%%Assumption%%%%%%%%%%%%%%%%%%%%%%%%%%%%%%%
\begin{itemize}
\item[(A.2)] The functions $k \mapsto u_j(k)$ are continuously differentiable 
in $k$ with
\[ \sup_{k \in [0,2\pi)} \left\|\frac{d}{dk} u_j(k) \right\|_{\mathbb{C}^2} < \infty. \]
\end{itemize}
%%%%%%%%%%%%%%%%%%%%%%%%%%%%%%%%%%%%%%%%%%%
Let $\mathcal{K}$ be the Hilbert space of 
square integrable functions
$f:[0,2\pi) \to \mathbb{C}^2$ with norm
\[ \|f\|_{\mathcal{K}}
	= \left( \int_0^{2\pi} \frac{dk}{2\pi} \|f(k)\|_{\mathbb{C}^2}^2 \right)^{1/2}. \]
Let $\mathscr{F}:\mathcal{H} \to \mathcal{K}$ be
the discrete Fourier transform given by
\[ (\mathscr{F}\Psi)(k) 
	= \sum_{x \in \mathbb{Z}} e^{-ik\cdot x} \Psi(x),
		\quad \Psi \in \mathcal{H}. \]
We also use 
$\hat \Psi (k) = \begin{pmatrix} \hat \Psi^{(0)}(k) \\  \hat \Psi^{(1)}(k) \end{pmatrix}$
to denote the Fourier transform of $\Psi$.
The asymptotic velocity $\hat v_0$ is the self-adjoint operator defined by
\[ \hat v_0 = \mathscr{F}^{-1}
	\left( \int_{[0,2\pi)}^\oplus  \frac{dk}{2\pi} \sum_{j=1,2}
		\left(\frac{ i \lambda_j^\prime(k) }{\lambda_j(k)} \right)
			|u_j(k) \rangle \langle u_j(k)|
	\right) \mathscr{F} \]
The position operator $\hat x$ is a self-adjoint operator
defined by 
\[ (\hat x \Psi)(x) = x \Psi(x), \quad x \in \mathbb{Z} \]
with domain
\[ D(\hat x) 
	= \left\{ \Psi \in \mathcal{H} ~\Big|~
		\sum_{x \in \mathbb{Z}} |x|^2 \|\Psi(x)\|_{\mathbb{C}^2}^2 < \infty \right\}. \]
Let $\hat x_0(t) = U_0^{-t} \hat x U_0^t$
be the Heisenberg operator of $\hat x$ for the evolution $U_0$.  
%%%%Theorem%%%%%%%%%%%%%%%%%%%%%%%%%%%%%%
\begin{theorem}
\label{thm02}
{\rm
Let $\hat v_0$ and $\hat x_0$ be as above.
Suppose that (A.2) holds.
Then,
\begin{equation}
\label{Eq_exp} 
\slim_{t \to \infty} 
	{\rm exp}\left(i \xi \frac{\hat x_0(t)}{t} \right)
		= {\rm exp}(i \xi \hat v_0 ), 
	\quad \xi \in \mathbb{R}. 
\end{equation}
}
\end{theorem}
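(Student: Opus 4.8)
The plan is to carry out the whole computation in the Fourier picture, where $U_0$ is diagonalized. Under $\mathscr{F}$ the operator $U_0$ becomes multiplication by $\hat U_0(k)=\mathrm{diag}(e^{ik},e^{-ik})C_0$, and, since $(\mathscr{F}\Psi)(k)=\sum_x e^{-ikx}\Psi(x)$, the position operator $\hat x$ becomes $i\,d/dk$ on periodic functions. Let $D\subset\mathcal{H}$ be the dense subspace of finitely supported $\Psi$; then $\hat\Psi=\mathscr{F}\Psi$ is a trigonometric polynomial, and since $U_0$ has finite propagation one has $U_0^{\pm t}D=D$, so in particular $\Psi\in D(\hat x_0(t))$ for every $t$. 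For such $\Psi$,
\[ (\mathscr{F}\,\hat x_0(t)\,\Psi)(k) = i\,\hat U_0(k)^{-t}\frac{d}{dk}\!\left(\hat U_0(k)^{t}\,\hat\Psi(k)\right) = i\,\hat U_0(k)^{-t}\!\left(\frac{d}{dk}\hat U_0(k)^{t}\right)\!\hat\Psi(k) + i\,\frac{d}{dk}\hat\Psi(k). \]

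Next I would insert the spectral decomposition $\hat U_0(k)^{t}=\sum_{j=1,2}\lambda_j(k)^{t}\,\Pi_j(k)$ with $\Pi_j(k)=|u_j(k)\rangle\langle u_j(k)|$. Using $\tfrac{d}{dk}(\lambda_j^{t}\Pi_j)=t\,\lambda_j^{t-1}\lambda_j'\,\Pi_j+\lambda_j^{t}\,\Pi_j'$ together with $\hat U_0(k)^{-t}\Pi_j(k)=\lambda_j(k)^{-t}\Pi_j(k)$ and $\Pi_l\Pi_j=\delta_{lj}\Pi_j$, the factor $t$ cancels in the principal term and
\[ \frac{1}{t}(\mathscr{F}\,\hat x_0(t)\,\Psi)(k) = \sum_{j=1,2}\frac{i\lambda_j'(k)}{\lambda_j(k)}\Pi_j(k)\hat\Psi(k) + \frac{i}{t}\,\hat U_0(k)^{-t}\!\Big(\sum_{j=1,2}\lambda_j(k)^{t}\Pi_j'(k)\Big)\hat\Psi(k) + \frac{i}{t}\frac{d}{dk}\hat\Psi(k). \]
The first term is precisely $(\mathscr{F}\hat v_0\Psi)(k)$. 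Since $|\lambda_j(k)|=1$ and $\Pi_j'(k)=|u_j'(k)\rangle\langle u_j(k)|+|u_j(k)\rangle\langle u_j'(k)|$, assumption (A.2) gives $\sup_k\|\Pi_j'(k)\|\le 2\sup_k\|u_j'(k)\|_{\mathbb{C}^2}<\infty$, so the second term has $\mathcal{K}$-norm $\le C\|\hat\Psi\|_{\mathcal{K}}/t$; the third term has $\mathcal{K}$-norm $\le\|\hat\Psi'\|_{\mathcal{K}}/t$. Both vanish as $t\to\infty$, so $t^{-1}\hat x_0(t)\Psi\to\hat v_0\Psi$ in $\mathcal{H}$ for every $\Psi\in D$. (At the isolated $k$ where $\lambda_1(k)=\lambda_2(k)$ the matrix $\hat U_0(k)$ is scalar and $\Pi_1+\Pi_2=I$, so the identities persist; in any case (A.2) presupposes a global $C^1$ choice of eigenprojections.)

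It then remains to pass from strong convergence on $D$ to convergence of the unitary groups. The space $D$ is a core for each self-adjoint operator $\hat x_0(t)=U_0^{-t}\hat x U_0^{t}$, because $U_0^{t}D=D$ and $D$ is a core for $\hat x$; and $\hat v_0$ is bounded, since its Fourier symbol $\sum_j(i\lambda_j'(k)/\lambda_j(k))\Pi_j(k)$ is a uniformly bounded Hermitian matrix function ($\lambda_j$ analytic, $|\lambda_j|=1$), so $D$ is a core for $\hat v_0$ as well. By the standard theorem on strong resolvent convergence of self-adjoint operators sharing a common core, $t^{-1}\hat x_0(t)\to\hat v_0$ in the strong resolvent sense, and hence $\exp(i\xi\,t^{-1}\hat x_0(t))\to\exp(i\xi\,\hat v_0)$ strongly for every $\xi\in\mathbb{R}$, which is \eqref{Eq_exp}. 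The core of the argument is the second step: isolating inside $t^{-1}\hat U_0(k)^{-t}\tfrac{d}{dk}\hat U_0(k)^{t}$ the genuinely $O(1/t)$ remainder, and here (A.2) is exactly the hypothesis that forces the eigenprojection-derivative term $\Pi_j'(k)$ to be bounded and the remainder to vanish uniformly; passing to the Fourier picture and verifying the common-core property are routine.
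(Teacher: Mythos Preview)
Your proof is correct and follows essentially the same path as the paper: both carry out the computation in the Fourier picture, expand $\hat U_0(k)^t$ spectrally, use (A.2) to show $\|(\hat v_0 - t^{-1}\hat x_0(t))\Psi\| = O(1/t)$ on a dense core, and then pass to the unitary groups via strong resolvent convergence. The only cosmetic difference is in the last step: the paper obtains resolvent convergence through the second resolvent identity together with the invariance of its core $\mathcal{D}$ under $(\hat v_0-z)^{-1}$, whereas you invoke the common-core criterion (Reed--Simon, Theorem~VIII.25(a)) directly, which is slightly more economical.
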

%%%%%%%%%%%%%%%%%%%%%%%%%%%%%%%%%%%%%%%%%%%%%%
\begin{proof}%[Proof of Theorem \ref{thm02}]
By \cite[Theorem VIII.21]{RSI},
\eqref{Eq_exp}  holds if and only if 
\begin{equation*} 
%\label{Eq_resol}
\slim_{t \to \infty} \left( \frac{\hat x_0(t)}{t} - z \right)^{-1}
	 = (\hat v_0 - z)^{-1},
	\quad z \in \mathbb{C} \setminus \mathbb{R},
\end{equation*}
which is proved in Subsection 4.1.  
\end{proof}
%%Example%%%%%%%%%%%%%%%%%%%%%%%%%%%%%%%%%%%%%%%%%%
\begin{example}
\label{ex_2.3}
{\rm
\begin{itemize}
\item[(i)] Let $C_0 = \begin{pmatrix} 0 & 1 \\ 1 & 0 \end{pmatrix}$.
Then, $\hat U_0(k)$ has eigenvalues $1$ and $-1$,
which are independent of $k$. 
By definition, $\hat v_0$ = 0.
Hence, the random variable $X^{(0)}_t/t$ converges in law
to a random variable $V_0$ with a probability distribution $\delta_0$.
\item[(ii)] Let $C_0 = \begin{pmatrix} 1 & 0 \\ 0 & -1 \end{pmatrix}$.
$\hat U_0(k)$ has eigenvalues $e^{ik}$ and $-e^{-ik}$.
Hence, $\hat v_0$ has eigenvalues $-1$ and $1$.
The random variable $X^{(0)}_t/t$ converges in law
to a random variable $V_0$ with a probability distribution 
$\|\Psi^{(0)}\|^2 \delta_{-1} + \|\Psi^{(1)}\|^2 \delta_1$. 
\item[(iii)] Let $C_0$ be the Hadamard matrix.
The eigenvalues of $\hat U_0(k)$ are given by 
$\lambda_j(k) = ((-1)^j w(k) + i \sin k)/\sqrt{2}$ ($j=1,2$),
where $w(k) = \sqrt{1+\cos^2 k}$.
Hence, $\hat v_0$ has no eigenvalue.
The corresponding eigenvectors 
\[ u_j(k) 
	= \sqrt{\frac{w(k)+(-1)^j \cos k}{2w(k)}} 
		\begin{pmatrix} e^{ik} \\ (-1)^j w(k) - \cos k \end{pmatrix} \]
form an ONB of $\mathbb{C}^2$ 
and satisfy (A.2).
The random variable $X^{(0)}_t/t$ converges in law
to a random variable $V_0$ with a probability distribution 
$\|E_{\hat v_0}(\cdot)\Psi_0\|^2$,
where $E_{\hat v_0}$ is the spectral measure of $\hat v_0$.
Let us consider the Hadmard walk starting from the origin.
Let the initial state $\Psi_0$ satisfy 
$\Psi_0(0)= \begin{pmatrix} \alpha \\ \beta \end{pmatrix}$
($|\alpha|^2 + |\beta|^2=1$)
and $\Psi(x) = 0$ if $x\not=0$.
Then, 
\[ d \| E_{\hat v_0}(v)\Psi_0 \|^2
	= (1 - c_{\alpha,\beta} v) f_K\left(v;\frac{1}{\sqrt{2}}\right) dv, \]
where 
$c_{\alpha, \beta} 
= |\alpha|^2-|\beta|^2+\alpha \bar{\beta} + \bar{\alpha} \beta$,
\[ f_K(v;r) 
= \frac{\sqrt{1-r^2}}{\pi(1-v^2)\sqrt{r^2-v^2}}I_{(-r,r)}(v)
\]
is the Konno function, and $I_A$ is the indicator function of a set $A$.
For more details, the reader can consult \cite{GrJaSc04, Ko08}.
\end{itemize}
}
\end{example}
%%%%%%%%%%%%%%%%%%%%%%%%%%%%%%%%%%%%%%%%%

Let $\hat x(t) = U^{-t} \hat x U$ be the Heisenberg operator of $\hat x$
and define the asymptotic velocity $\hat v_+$ 
for the evolution $U$ by
\[ \hat v_+ = W_+ \hat v_0 W_+^*. \] 
We need the following assumption:
%%%%%%%%%%%%%%%%%%%%%%%%%%%%%%%%%%%%%%%
\begin{itemize}
\item[(A.3)] The singular continuous spectrum of $U$ is empty.
\end{itemize}
We are now in a psition to state our main result,
which is proved in Subsection 4.2. 
%%%%%%%%%%%%%%%%%%%%%%%%%%%%%%%%%%%%%%
\begin{theorem}
\label{thm03}
{\rm
Let $\hat x(t)$ and $\hat v_+$ be as above.
Suppose that (A.1) - (A.3) hold.
Then,
\[ \slim_{t \to \infty} {\rm exp} \left( i \xi \frac{\hat x(t)}{t} \right)
	= \Pi_{\rm p}(U) 
	+{\rm exp} \left(i \xi \hat v_+ \right) \Pi_{\rm ac}(U),
		\quad \xi \in \mathbb{R}. \]
}
\end{theorem}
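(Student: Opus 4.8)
The plan is to mimic the route used for Theorem~\ref{thm02}: by \cite[Theorem VIII.21]{RSI} the asserted strong limit is equivalent to the strong resolvent convergence
\[
\slim_{t\to\infty}\left(\frac{\hat x(t)}{t}-z\right)^{-1}=(\hat v_+-z)^{-1},\qquad z\in\mathbb{C}\setminus\mathbb{R},
\]
so it suffices to prove this. Assumption (A.3) gives the orthogonal decomposition $\mathcal{H}=\mathcal{H}_{\rm p}(U)\oplus\mathcal{H}_{\rm ac}(U)$, with $\mathcal{H}_{\rm p}(U)=\mathrm{Ran}\,\Pi_{\rm p}(U)$ and $\mathcal{H}_{\rm ac}(U)=\mathrm{Ran}\,\Pi_{\rm ac}(U)$. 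By Theorem~\ref{thm01}, $W_+$ is a partial isometry with initial space $\mathcal{H}_{\rm ac}(U_0)$ and final space $\mathcal{H}_{\rm ac}(U)$, i.e.\ $W_+^\ast W_+=\Pi_{\rm ac}(U_0)$ and $W_+W_+^\ast=\Pi_{\rm ac}(U)$; moreover $\hat v_0$ commutes with $U_0$ (both act fibrewise through $\mathscr{F}$ as multiplication operators built from the spectral decomposition of $\hat U_0(k)$), hence leaves $\mathcal{H}_{\rm ac}(U_0)$ invariant. Therefore $\hat v_+=W_+\hat v_0 W_+^\ast$ vanishes on $\mathcal{H}_{\rm p}(U)$ and, restricted to $\mathcal{H}_{\rm ac}(U)$, is unitarily equivalent via $W_+$ to $\hat v_0$ on $\mathcal{H}_{\rm ac}(U_0)$, so $(\hat v_+-z)^{-1}=(-z)^{-1}\Pi_{\rm p}(U)+W_+(\hat v_0-z)^{-1}W_+^\ast$. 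It is then enough to verify the resolvent limit on each summand, and the $\exp$ form of the conclusion follows from $\slim_{t\to\infty}\exp(i\xi\hat x(t)/t)=\exp(i\xi\hat v_+)=\Pi_{\rm p}(U)+\exp(i\xi\hat v_+)\Pi_{\rm ac}(U)$, the last equality because $\hat v_+$ is zero on $\mathcal{H}_{\rm p}(U)$.

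\emph{The point subspace.} For $\Psi\in\mathcal{H}_{\rm p}(U)$ I would use the RAGE-type fact that the orbit $\{U^t\Psi\}_{t\in\mathbb{N}}$ is relatively compact in $\mathcal{H}$: this is immediate for a single eigenvector (its orbit lies on a circle), hence for finite linear combinations of eigenvectors, and then in general by an $\varepsilon/3$ argument. A relatively compact subset of $\ell^2(\mathbb{Z};\mathbb{C}^2)$ is uniformly tight, so $\sup_{t}\|\chi_{\{|x|>R\}}U^t\Psi\|\to0$ as $R\to\infty$. Using $(\hat x(t)/t-z)^{-1}=U^{-t}(\hat x/t-z)^{-1}U^t$,
\[
\left(\frac{\hat x(t)}{t}-z\right)^{-1}\Psi+z^{-1}\Psi=U^{-t}\left[\left(\frac{\hat x}{t}-z\right)^{-1}+z^{-1}\right]U^t\Psi,
\]
and the bracket is multiplication by $m_t(x)=\dfrac{x/t}{z\,(x/t-z)}$, which is bounded uniformly in $x$ and $t$ and is $O(R/t)$ on $\{|x|\le R\}$. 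Splitting the $\ell^2$-norm of the right-hand side at $|x|=R$, choosing $R$ large by tightness and then $t$ large, one gets $(\hat x(t)/t-z)^{-1}\Psi\to-z^{-1}\Psi=(\hat v_+-z)^{-1}\Psi$.

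\emph{The absolutely continuous subspace.} For $\Psi\in\mathcal{H}_{\rm ac}(U)$ write $\Psi=W_+\Phi$ with $\Phi=W_+^\ast\Psi\in\mathcal{H}_{\rm ac}(U_0)$ and split
\[
W_+\Phi=\bigl(W_+-U^{-t}U_0^{\,t}\Pi_{\rm ac}(U_0)\bigr)\Phi+U^{-t}U_0^{\,t}\Phi.
\]
Applying the uniformly bounded operator $(\hat x(t)/t-z)^{-1}$, the contribution of the first term tends to $0$ by the definition of $W_+$. For the second, the identities $(\hat x(t)/t-z)^{-1}=U^{-t}(\hat x/t-z)^{-1}U^t$ and $(\hat x/t-z)^{-1}U_0^{\,t}=U_0^{\,t}(\hat x_0(t)/t-z)^{-1}$ give
\[
\left(\frac{\hat x(t)}{t}-z\right)^{-1}U^{-t}U_0^{\,t}\Phi=U^{-t}U_0^{\,t}\left(\frac{\hat x_0(t)}{t}-z\right)^{-1}\Phi.
\]
By Theorem~\ref{thm02}, $(\hat x_0(t)/t-z)^{-1}\Phi\to(\hat v_0-z)^{-1}\Phi$ in norm; since $(\hat v_0-z)^{-1}\Phi\in\mathcal{H}_{\rm ac}(U_0)$ ($\hat v_0$ commuting with $U_0$), $\|U^{-t}U_0^{\,t}\|=1$, and $U^{-t}U_0^{\,t}\Pi_{\rm ac}(U_0)\to W_+$ strongly, the right-hand side converges to $W_+(\hat v_0-z)^{-1}\Phi=(\hat v_+-z)^{-1}W_+\Phi$. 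Adding the two subspaces yields the strong resolvent convergence on $\mathcal{H}$, and \cite[Theorem VIII.21]{RSI} turns it back into the claimed statement.

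I expect the main obstacle to be the $\mathcal{H}_{\rm p}(U)$ part: making the uniform tightness (relative compactness of orbits) precise is exactly where (A.3) enters, since with a nontrivial singular continuous subspace one would only control Cesàro averages of $\chi_{\{|x|>R\}}U^t\Psi$ rather than $\chi_{\{|x|>R\}}U^t\Psi$ itself uniformly in $t$, and the pointwise-in-$t$ conclusion would fail. A secondary technical nuisance is that $\Pi_{\rm ac}(U)$ commutes with $U$ but not with $\hat x$, so one cannot simply compress $\hat x(t)/t$ to the absolutely continuous subspace; the two mechanisms — localization of bound states and intertwining by $W_+$ on the scattering part — must be carried out on all of $\mathcal{H}$ and only recombined at the end.
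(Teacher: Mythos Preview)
Your proposal is correct and structurally parallel to the paper's proof: both split $\mathcal{H}=\mathcal{H}_{\rm p}(U)\oplus\mathcal{H}_{\rm ac}(U)$ via (A.3), handle the point part by approximating with finite sums of eigenvectors (your tightness/relative-compactness phrasing is equivalent to the paper's dominated-convergence argument on $(e^{i\xi x/t}-1)\eta_n$), and handle the absolutely continuous part by intertwining through $W_t=U^{-t}U_0^{\,t}$ and invoking the free-walk result together with Theorem~\ref{thm01}. The one genuine technical difference is that you work with resolvents throughout and appeal to \cite[Theorem VIII.21]{RSI} at the end, whereas the paper manipulates the unitary groups $\exp(i\xi\,\cdot\,)$ directly (its Theorems~\ref{thm_p} and~\ref{thm_ac}, plus Lemma~\ref{lem_com} for the commutation $[U_0,e^{i\xi\hat v_0}]=0$). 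Your route has the minor bonus of making explicit that $\hat v_+=0$ on $\mathcal{H}_{\rm p}(U)$, so the right-hand side of the theorem is simply $\exp(i\xi\hat v_+)$; the paper does not state this and instead keeps the two pieces separate. Conversely, the paper's exponential calculation on $\mathcal{H}_{\rm ac}(U)$ (the $I_1+I_2+I_3$ splitting) is slightly more symmetric and avoids the extra sentence you need about $(\hat v_0-z)^{-1}$ preserving $\mathcal{H}_{\rm ac}(U_0)$, but in substance the two arguments are the same.
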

%%%%%%%%%%%%%%%%%%%%%%%%%%%%%%%%%%%%%%%%
Let $X_t$ be the random variable denoting the position of the walker
at time $t \in \mathbb{N}$ with the initial state $\Psi_0$.
We use $\Pi_{\rm p}(U)$ to denote the orthogonal projection onto
the direct sum of all eigenspaces of $U$
and $E_{A}$ to denote the spectral projection of 
a self-adjoint operator $A$.
%%%%%%%%%%%%%%%%%%%%%%%%%%%%%%%%%%%%%%%%
\begin{corollary}
\label{coro_thm03}
{\rm
Let $X_t$ be as above.
Suppose that (A.1) - (A.3) hold.
Then,
$X_t/t$ converges in law to a random variable $V$
with a probability distribution 
\[ \mu_V = \| \Pi_{\rm p}(U)\Psi_0\|^2 \delta_0 
	+ \|E_{\hat v_+}(\cdot)\Pi_{\rm ac}(U)\Psi_0\|^2, \]
where $\delta_0$ is the Dirac measure at zero.
}
\end{corollary}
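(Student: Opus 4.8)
The plan is to deduce the corollary from Theorem~\ref{thm03} by recognizing the characteristic function of $X_t/t$ as a diagonal matrix element of $\exp(i\xi\hat x(t)/t)$, identifying its strong limit with the Fourier transform of $\mu_V$, and then invoking L\'evy's continuity theorem.

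First I would record the elementary identity
\[
\mathbb{E}\bigl(e^{i\xi X_t/t}\bigr)
= \sum_{x\in\mathbb{Z}} e^{i\xi x/t}\,\|(U^t\Psi_0)(x)\|_{\mathbb{C}^2}^2
= \bigl\langle U^t\Psi_0,\, e^{i\xi\hat x/t}\,U^t\Psi_0\bigr\rangle
= \bigl\langle \Psi_0,\, e^{i\xi\hat x(t)/t}\,\Psi_0\bigr\rangle ,
\]
valid for the unit initial vector $\Psi_0$, where in the last step I used that functional calculus commutes with conjugation by the unitary $U^t$, so that $e^{i\xi\hat x(t)/t} = U^{-t}e^{i\xi\hat x/t}U^t$. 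Since strong convergence implies weak convergence, Theorem~\ref{thm03} then gives, for every $\xi\in\mathbb{R}$,
\[
\lim_{t\to\infty}\mathbb{E}\bigl(e^{i\xi X_t/t}\bigr)
= \bigl\langle\Psi_0,\,\Pi_{\rm p}(U)\Psi_0\bigr\rangle
+ \bigl\langle\Psi_0,\, e^{i\xi\hat v_+}\,\Pi_{\rm ac}(U)\Psi_0\bigr\rangle .
\]

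Next I would identify this limit as $\int_{\mathbb{R}} e^{i\xi v}\,d\mu_V(v)$. The first term equals $\|\Pi_{\rm p}(U)\Psi_0\|^2 = \int_{\mathbb{R}} e^{i\xi v}\, d\bigl(\|\Pi_{\rm p}(U)\Psi_0\|^2\,\delta_0\bigr)(v)$ because $\Pi_{\rm p}(U)$ is an orthogonal projection. For the second term I would use that, by the existence and completeness in Theorem~\ref{thm01}, $W_+$ is a partial isometry with $W_+W_+^* = \Pi_{\rm ac}(U)$ and $W_+^*W_+ = \Pi_{\rm ac}(U_0)$; hence $\hat v_+ = W_+\hat v_0 W_+^*$ is self-adjoint, satisfies $\hat v_+\Pi_{\rm ac}(U) = \Pi_{\rm ac}(U)\hat v_+ = \hat v_+$, and its restriction to $\mathcal{H}_{\rm ac}(U)$ has spectral measure $E_{\hat v_+}$. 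The spectral theorem then gives
\[
\bigl\langle\Psi_0,\, e^{i\xi\hat v_+}\,\Pi_{\rm ac}(U)\Psi_0\bigr\rangle
= \bigl\langle\Pi_{\rm ac}(U)\Psi_0,\, e^{i\xi\hat v_+}\,\Pi_{\rm ac}(U)\Psi_0\bigr\rangle
= \int_{\mathbb{R}} e^{i\xi v}\, d\|E_{\hat v_+}(v)\Pi_{\rm ac}(U)\Psi_0\|^2 ,
\]
so that altogether $\lim_{t\to\infty}\mathbb{E}(e^{i\xi X_t/t}) = \int_{\mathbb{R}} e^{i\xi v}\,d\mu_V(v)$ with $\mu_V$ as in the statement.

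Finally I would check that $\mu_V$ is a probability measure and conclude. This is where assumption (A.3) is indispensable: the absence of a singular continuous spectrum yields the orthogonal decomposition $\mathcal{H} = \mathcal{H}_{\rm p}(U)\oplus\mathcal{H}_{\rm ac}(U)$, whence $\mu_V(\mathbb{R}) = \|\Pi_{\rm p}(U)\Psi_0\|^2 + \|\Pi_{\rm ac}(U)\Psi_0\|^2 = \|\Psi_0\|^2 = 1$. Thus the characteristic functions of $X_t/t$ converge pointwise to $\xi\mapsto\int_{\mathbb{R}} e^{i\xi v}\,d\mu_V(v)$, which is the characteristic function of the probability measure $\mu_V$ and in particular continuous at $\xi = 0$; by L\'evy's continuity theorem, $X_t/t$ converges in law to a random variable $V$ with law $\mu_V$. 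The computation is short once Theorem~\ref{thm03} is available; the only delicate points are the bookkeeping relating $\hat v_+$, $W_+$ and $\Pi_{\rm ac}(U)$, and the use of (A.3) to guarantee that no mass escapes into a singular continuous component — without it the limiting function need not be a characteristic function at all.
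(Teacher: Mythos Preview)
Your proof is correct and follows essentially the same route as the paper: compute $\mathbb{E}(e^{i\xi X_t/t})=\langle\Psi_0,e^{i\xi\hat x(t)/t}\Psi_0\rangle$, apply Theorem~\ref{thm03}, and identify the limit with $\int e^{i\xi v}\,d\mu_V(v)$. The only minor difference is that the paper obtains $[e^{i\xi\hat v_+},\Pi_{\rm ac}(U)]=0$ via Lemma~\ref{lem_com} and the intertwining relation $UW_+=W_+U_0$, whereas you extract it directly from the partial-isometry identities $W_+W_+^*=\Pi_{\rm ac}(U)$, $W_+^*W_+=\Pi_{\rm ac}(U_0)$; your explicit appeal to L\'evy's continuity theorem and the check that $\mu_V(\mathbb{R})=1$ make the conclusion a bit more self-contained than the paper's version.
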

%%%%%%%%%%%%%%%%%%%%%%%%%%%%%%%%%%%%%%%%%%%
\begin{proof}
From Theorem \ref{thm01}, $\slim_{t \to \infty} U_0^{-t} U^t ~\Pi_{\rm ac}(U)$
exists and is equal to $W_+^*$.
Then, $W_+$ is unitary from ${\rm Ran}W_+^* = {\rm Ran}\Pi_{\rm ac}(U_0)$ 
to ${\rm Ran}W_+ = {\rm Ran}\Pi_{\rm ac}(U)$.
Since, by Lemma \ref{lem_com}, $U_0$  is strongly commuting with $\hat v_0$, 
we know, from the intertwining property $UW_+ = W_+ U_0$,
that $U$ is also strongly commuting with $\hat v_+$.
Hence, $\hat v_+$ is strongly commuting with
%$\Pi_{\rm p}(U)$ and 
$\Pi_{\rm ac}(U)$ and
$e^{i \xi \hat v_+} \Pi_{\rm ac}(U) = \Pi_{\rm ac}(U) e^{i \xi \hat v_+}$. 
%Since, by (A.3), $\mathcal{H}$ is decomposed into
%the direct sum
%$\mathcal{H} = {\rm Ran}\Pi_{\rm p}(U) \oplus {\rm Ran}\Pi_{\rm ac}(U)$, 
%$e^{i\xi \hat v_+}$ is decomposed into
%$e^{i\xi \hat v_+} = \Pi_{\rm p}(U) \oplus e^{i\xi v_+} \Pi_{\rm ac}(U)$.
Hence, by Theorem \ref{thm03}, 
${\rm exp}(i\xi \hat x(t)/t) \Psi_0$ converges strongly to
$\Pi_{\rm p}(U)\Psi_0 + e^{i \xi \hat v_+} \Pi_{\rm ac}(U)\Psi_0$
and
\begin{align*}
\lim_{t \to \infty} \mathbb{E}(e^{i \xi X_t/t})
& = \langle \Psi_0, \Pi_{\rm p}(U)\Psi_0 + e^{i \xi \hat v_+} \Pi_{\rm ac}(U)\Psi_0 \rangle \\
& = \|\Pi_{\rm p}(U)\Psi_0\|^2 + 
	\int_{-\infty}^\infty e^{i \xi v} 
		d\| E_{\hat v_+}(v) \Pi_{\rm ac}(U)\Psi_0\|^2 \\
& = \int_{-\infty}^\infty e^{i\xi v} d\mu_V(v),
\end{align*}
which proves the corollary.
\end{proof}
%%%%%%%%%%%%%%%%%%
\begin{example}
{\rm
Let $C_0$ be the Hadmard matrix
and $C(x)$ satisfy (A.1).
As seen in Example \ref{ex_2.3} (iii),
(A.2) is satisfied and
the spectrum of $U_0$ is purely absolutely continuous.
Let $\Psi_+ \in \mathcal{H}$ satisfy 
$\Psi_+(0) 
= \begin{pmatrix} \alpha \\ \beta \end{pmatrix}$
($|\alpha|^2 + |\beta|^2 = 1$)
and $\Psi_+(x) = 0$ if $x\not=0$. 
By Example \ref{ex_2.3}, 
\[ d\|E_{\hat v_+}(v) \Pi_{\rm ac}(U) W_+ \Psi_+\|^2
	= d\|E_{\hat v_0}(v) \Psi_+ \|^2
	=(1- c_{\alpha, \beta} v) f_K\left(v;\frac{1}{\sqrt{2}}\right)dv. \]
Let $\Psi_{\rm p} \in {\rm Ran} \Pi_{\rm p}(U_0)$
be a unit vector
and take the initial state $\Psi_0$ as 
$\Psi_0 = C_1 \Psi_{\rm p} + C_2 W_+ \Psi_+$
($|C_1|^2 + |C_2|^2 = 1$).
Suppose that $U = SC$ satisfies (A.3).
By Corollary \ref{coro_thm03},
$X_t/t$ converges in law to $V$ with a probability distribution
$\mu_V$ and
\[ \mu_V(dv)
	= |C_1|^2 \delta_0(dv) 
		+ |C_2|^2  (1- c_{\alpha, \beta} v) 
			f_K\left(v;\frac{1}{\sqrt{2}}\right)dv. \]
}
\end{example}

%%%Section%%%%%%%%%%%%%%%%%%%%%%%%%%%%%%%%%%%
\section{Wave operator}
%%%%%%%%%%%%%%%%%%%%%%%%%%%%%%%%%%%%%%%%%%%
To prove Theorem \ref{thm01}, 
we use the following general proposition:
%%%%%Proposition%%%%%%%%%%%%%%%%%%%%%%%%%
\begin{proposition}
\label{propWave}
{\rm
Let $U$ and $U_0$ be unitary operators 
on a Hilbert space $\mathcal{H}$
and suppose that $U-U_0 \in \mathscr{T}_1$.
The following limit exists: 
\[ W_+ = \slim_{t \to \infty} U^{-t} U_0^t \Pi_{\rm ac}(U_0) \]
}
\end{proposition}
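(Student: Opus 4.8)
The plan is to reduce the statement to the classical Kato--Rosenblum theorem for a pair of self-adjoint operators, via the Cayley transform, and then to transport the resulting wave operator back to the discrete-time unitary dynamics by means of the invariance principle for wave operators. As a preliminary normalization, after multiplying both $U$ and $U_0$ by a fixed phase $e^{i\theta_0}$ --- which changes neither the operator $U^{-t}U_0^t$, nor $\Pi_{\rm ac}(U_0)$, nor the hypothesis $U-U_0\in\mathscr{T}_1$ --- we may assume that $1$ is not an eigenvalue of $U$ or of $U_0$, so that the Cayley transforms
\[ A=i(I+U)(I-U)^{-1},\qquad A_0=i(I+U_0)(I-U_0)^{-1} \]
are (densely defined) self-adjoint operators on $\mathcal H$ with $U=(A-i)(A+i)^{-1}$ and $U_0=(A_0-i)(A_0+i)^{-1}$.

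A one-line computation from these identities gives $(A+i)^{-1}=\tfrac{1}{2i}(I-U)$, and likewise for $A_0$, so that
\[ (A+i)^{-1}-(A_0+i)^{-1}=\tfrac{1}{2i}(U_0-U)\in\mathscr{T}_1 \]
by hypothesis. Thus $A$ and $A_0$ have trace-class resolvent difference, and the resolvent form of the Kato--Rosenblum theorem (see \cite{Si}) shows that the wave operators $\Omega_\pm(A,A_0)=\slim_{s\to\pm\infty}e^{isA}e^{-isA_0}\Pi_{\rm ac}(A_0)$ exist and are complete. The Cayley transform is a Borel isomorphism of $\mathbb R$ onto $S^1\setminus\{1\}$ carrying null sets to null sets in both directions; hence it preserves spectral types, and in particular $\Pi_{\rm ac}(A_0)=\Pi_{\rm ac}(U_0)$ and $\Pi_{\rm ac}(A)=\Pi_{\rm ac}(U)$. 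Now write the inverse Cayley transform as $g(\lambda)=(\lambda-i)(\lambda+i)^{-1}=e^{i\psi(\lambda)}$, where $\psi(\lambda)=\arg g(\lambda)$ is real-valued, bounded, of class $C^\infty$, and strictly increasing ($\psi'(\lambda)=2/(1+\lambda^2)>0$); thus $U=e^{i\psi(A)}$ and $U_0=e^{i\psi(A_0)}$, so that for every integer $t$,
\[ U^{-t}U_0^t\Pi_{\rm ac}(U_0)=e^{-it\psi(A)}e^{it\psi(A_0)}\Pi_{\rm ac}(A_0). \]
Since the phase function $\psi$ is admissible, the invariance principle applies and yields that the limit $\slim_{s\to+\infty}e^{-is\psi(A)}e^{is\psi(A_0)}\Pi_{\rm ac}(A_0)$ exists and equals $\Omega_-(A,A_0)$; specializing $s$ to positive integers $t$ then shows that $W_+=\slim_{t\to\infty}U^{-t}U_0^t\Pi_{\rm ac}(U_0)$ exists (and, by completeness of $\Omega_-(A,A_0)$ together with $\Pi_{\rm ac}(A)=\Pi_{\rm ac}(U)$, is complete, which also gives Theorem \ref{thm01}).

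I expect the delicate point to be this last transfer, i.e.\ identifying the limit of the genuinely discrete dynamics $t\mapsto U^{-t}U_0^t$ with a continuous-time wave operator; this is exactly what the invariance principle accomplishes, and what makes it applicable here is that the phase $\psi$ relating $A$ to $U$ is strictly monotone (no critical points, hence no spectral mass frozen in passing from the flow $e^{isA}$ to the iteration of $U$). A self-contained alternative, bypassing the invariance principle, is to transcribe the Kato--Rosenblum argument directly in the unitary setting: factor $U-U_0=B^*B'$ with $B,B'$ Hilbert--Schmidt, establish the discrete smoothness estimate $\sum_{t\in\mathbb Z}\|B'U_0^t\Pi_{\rm ac}(U_0)\Psi\|^2\le C\|\Psi\|^2$ on the dense set of $\Psi$ with bounded spectral density, and deduce that $U^{-t}U_0^t\Psi$ is Cauchy from the telescoping identity $U^{-(t+1)}U_0^{t+1}-U^{-t}U_0^t=U^{-(t+1)}(U_0-U)U_0^t$ together with a Cauchy--Schwarz bound on the resulting double sum.
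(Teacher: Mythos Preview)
Your main route---Cayley transform, then Kuroda--Birman for the pair $(A,A_0)$, then the invariance principle to pass to the discrete dynamics of $U=e^{i\psi(A)}$---is genuinely different from what the paper does, and it is a natural idea. But there is a real gap at the invariance step. The textbook Birman--Kato invariance principle (Reed--Simon III, Thm.~XI.11, or the version in \cite{Si}) is stated under the hypothesis that the \emph{difference of the self-adjoint generators} lies in $\mathscr{T}_1$. Here $A-A_0$ is typically unbounded (whenever $\sigma(U_0)=S^1$, as for the Hadamard walk, one has $\sigma(A_0)=\mathbb R$ regardless of the phase rotation), and $\psi(A)-\psi(A_0)=-i(\log(-U)-\log(-U_0))$ involves a branch of the logarithm whose branch point sits on the absolutely continuous spectrum; for such discontinuous $f$ the implication ``$U-U_0\in\mathscr{T}_1\Rightarrow f(U)-f(U_0)\in\mathscr{T}_1$'' fails in general. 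There \emph{are} forms of the invariance principle that work under a resolvent trace-class hypothesis (Birman's local theory; see Yafaev's monograph), but they are not the results you cite and need their own justification. The point you flag as delicate---passing from continuous $s$ to integer $t$---is not the obstacle; the admissibility of $\psi$ is fine, the question is whether the principle applies at all with only $(A+i)^{-1}-(A_0+i)^{-1}\in\mathscr{T}_1$.

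The paper avoids this by transcribing the Kato--Rosenblum/Pearson argument directly in the unitary setting. Your closing ``self-contained alternative'' is heading exactly there, and the $\ell^2$ smoothness estimate you write down is correct (it is Lemma~\ref{lemac0}). However, your last sentence underestimates the remaining work: a straightforward Cauchy--Schwarz on the telescoped double sum only gives $\|(W_t-W_s)\psi\|\le \|B\|\sum_{k=s}^{t-1}\|B'U_0^k\psi\|$, which would need $\ell^1$ rather than $\ell^2$ control. The paper instead expands $\|(W_t-W_s)\psi\|^2=\langle\psi,W_t^*(W_t-W_s)\psi\rangle-\langle\psi,W_s^*(W_t-W_s)\psi\rangle$, uses compactness of $W_t-W_s$ (Lemma~\ref{lemslimonac}) to rewrite each inner product as an infinite sum $\sum_{k\ge 0}\langle U_0^{k+t}\psi,\,(\cdot)\,U_0^{k+s}\psi\rangle$ with a fixed trace-class kernel, and only then applies Cauchy--Schwarz together with the $\ell^2$ bound (Lemma~\ref{lemZ}); this extra manipulation is what turns square-summability into a genuine Cauchy estimate.
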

%%%%%%%%%%%%%%%%%%%%%%%%%%%%%%%%%%%
\begin{proof}[Proof of Theorem \ref{thm01}]
Since, by Lemma \ref{lemT1}, $U-U_0 \in \mathscr{T}_1$,
the wave operator $W_+$ exists.
If we interchange the roles of $U$ and $U_0$,
then the proposition says that
the limit $\slim_{t \to \infty} U_0^{-t} U^t \Pi_{\rm ac}(U)$ also exists,
which implies that $W_+$ is complete.
This completes the proof.
\end{proof}
%%%%%%%%%%%%%%%%%%%%%%%%%%%%%%%%%%%%%%

In the remainder of this section,
we suppose that $U-U_0 \in \mathscr{T}_1$ 
and prove Proposition \ref{propWave}. 
This is done by a discrete analogue of \cite[Theorem 6.2.]{Si}.
We use $\mathcal{H}_{\rm ac}$
and $\mathcal{H}_{\rm p}$
to denote the subspaces of absolute continuity and 
the direct sum of all eigenspaces of $U_0$. 
Let $E_0$ be the spectral measure of $U_0$
with $E_0([0,2\pi)) = I$.
Let
\[ \mathcal{H}_{\rm ac,0}
	= \{ \psi \in \mathcal{H}_{\rm ac} \mid
		d \|E_0(\lambda) \psi\|^2 = G_\psi(\lambda)^2 d\lambda
		\mbox{ and } G_\psi \in L^2 \cap L^\infty \}, \]
where $L^2 = L^2([0,2\pi))$ and  $L^\infty = L^\infty([0,2\pi))$.
Although the following lemma may be well known,
we give proofs for completeness.
%%%%Lemma%%%%%%%%%%%%%%%%%
\begin{lemma}
\label{lemdense}
{\rm
$\mathcal{H}_{\rm ac,0}$ is dense in $\mathcal{H}_{\rm ac}$.
}
\end{lemma}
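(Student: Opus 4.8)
The plan is to take an arbitrary $\psi \in \mathcal{H}_{\rm ac}$ and exhibit a sequence in $\mathcal{H}_{\rm ac,0}$ converging to it, produced by a spectral cut-off of the spectral density. First I would observe that, by the very definition of $\mathcal{H}_{\rm ac}$, the scalar measure $\mu_\psi(d\lambda) := d\|E_0(\lambda)\psi\|^2$ is absolutely continuous with respect to Lebesgue measure on $[0,2\pi)$, so the Radon--Nikodym theorem provides a nonnegative $g \in L^1([0,2\pi))$ with $\mu_\psi(d\lambda) = g(\lambda)\,d\lambda$; in particular $\int_0^{2\pi} g(\lambda)\,d\lambda = \|\psi\|^2$ and $G_\psi = g^{1/2} \in L^2$, though in general not in $L^\infty$.

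Next I would truncate where $g$ is large. For $n \in \mathbb{N}$ put $A_n = \{\lambda \in [0,2\pi) : g(\lambda) \le n\}$ and $\psi_n = E_0(A_n)\psi$. Since $E_0(A_n)$ commutes with the whole spectral family of $U_0$ and preserves $\mathcal{H}_{\rm ac}$, we have $\psi_n \in \mathcal{H}_{\rm ac}$, and from $E_0(B)\psi_n = E_0(B\cap A_n)\psi$ one reads off, for every Borel set $B \subset [0,2\pi)$,
\[ \|E_0(B)\psi_n\|^2 = \|E_0(B\cap A_n)\psi\|^2 = \int_B \mathbf{1}_{A_n}(\lambda)\,g(\lambda)\,d\lambda, \]
so that $d\|E_0(\lambda)\psi_n\|^2 = \mathbf{1}_{A_n}(\lambda)\,g(\lambda)\,d\lambda$. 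Hence $G_{\psi_n} = (\mathbf{1}_{A_n} g)^{1/2}$ satisfies $0 \le G_{\psi_n} \le \sqrt{n}$ and $\|G_{\psi_n}\|_{L^2}^2 = \int_{A_n} g\,d\lambda \le \|\psi\|^2 < \infty$, so $G_{\psi_n} \in L^2 \cap L^\infty$ and therefore $\psi_n \in \mathcal{H}_{\rm ac,0}$.

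It then remains to check convergence: $\|\psi - \psi_n\|^2 = \|E_0([0,2\pi)\setminus A_n)\psi\|^2 = \int_{\{g>n\}} g(\lambda)\,d\lambda$, and since $g \in L^1$ the sets $\{g>n\}$ decrease to a Lebesgue-null set, so the integral tends to $0$ by the dominated convergence theorem with dominating function $g$. This yields $\psi_n \to \psi$ in $\mathcal{H}$, proving that $\mathcal{H}_{\rm ac,0}$ is dense in $\mathcal{H}_{\rm ac}$.

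I do not expect any genuine obstacle here; this is essentially a soft measure-theoretic statement. The only points needing a little care are making sure that the cut-off vectors remain in $\mathcal{H}_{\rm ac}$ itself — and not merely that their scalar spectral measures are absolutely continuous — which is immediate from the functional calculus, and the last limit, which is a one-line application of a convergence theorem.
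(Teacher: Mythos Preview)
Your argument is correct and is essentially identical to the paper's own proof: the paper also takes the Radon--Nikodym density $F$ of $d\|E_0(\lambda)\psi\|^2$, sets $B_n = F^{-1}([0,n])$, and approximates $\psi$ by $\psi_n = E_0(B_n)\psi$, exactly as you do with $g$ and $A_n$. Your write-up is in fact more detailed than the paper's, which leaves the convergence $\psi_n \to \psi$ as a one-word remark.
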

%%%%%%%%%%%%%%%%%%%%%%%%%%%
\begin{proof}
For all $\psi \in \mathcal{H}_{\rm ac}$,
there exists a positive function $F \in L^1$ such that
$d\|E_0(\lambda)\psi\|^2 = F (\lambda) d\lambda$. 
Let $B_n = F^{-1}([0,n])$, and let $\chi_{B_n}$ be the characteristic function of $B_n$.
We set $G_n = \sqrt{F} \chi_{B_n}$ and $\psi_n = E_0(B_n)\Psi$.
Then $G_n \in L^2 \cap L^\infty$
and $\|E_0(B) \psi_n\|^2 = \int_B G_n(\lambda)^2 d\lambda$. 
Hence, $\psi_n \in \mathscr{H}_{\rm ac,0}$
and $\psi = \lim_n \psi_n$. 
This completes the proof.
\end{proof}
%%%Lemma%%%%%%%%%%%%%%%%%%%%%%%%%
\begin{lemma}
\label{lemac0}
{\rm
Let $\phi \in \mathcal{H}$ and $\psi \in \mathcal{H}_{\rm ac,0}$.
Then,
\[ \sum_{t \in \mathbb{Z}}
	|\langle \phi, U_0^t \psi \rangle|^2
		\leq 2\pi \|\phi\|^2 \sup_{\lambda} G_\psi(\lambda)^2. \] 
}
\end{lemma}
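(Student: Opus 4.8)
The plan is to pass to the spectral representation of $U_0$ and recognize the left-hand side as a Parseval sum of the Fourier coefficients of a scalar complex measure, which is then controlled by a localized Cauchy--Schwarz estimate. By the spectral theorem for the unitary $U_0$ we have $U_0^t = \int_{[0,2\pi)} e^{it\lambda}\, dE_0(\lambda)$ for every $t \in \mathbb{Z}$, so
\[ \langle \phi, U_0^t\psi\rangle = \int_{[0,2\pi)} e^{it\lambda}\, d\mu(\lambda), \qquad \mu(B) := \langle \phi, E_0(B)\psi\rangle . \]
Since $\psi \in \mathcal{H}_{\rm ac,0}\subset\mathcal{H}_{\rm ac}$, we have $E_0(B)\psi = 0$ whenever $|B|=0$, so $\mu$ is absolutely continuous; writing $d\mu = m(\lambda)\,d\lambda$ with $m\in L^1([0,2\pi))$, the number $\langle\phi,U_0^t\psi\rangle$ is (up to the factor $2\pi$) the $(-t)$-th Fourier coefficient of $m$. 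Once $m\in L^2$ is known, Parseval's identity on $L^2([0,2\pi))$ yields
\[ \sum_{t\in\mathbb{Z}} |\langle\phi, U_0^t\psi\rangle|^2 = 2\pi\int_0^{2\pi} |m(\lambda)|^2\, d\lambda , \]
so it remains only to bound $\|m\|_{L^2}^2$ by $\|\phi\|^2\sup_\lambda G_\psi(\lambda)^2$.

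The heart of the matter is the pointwise bound $|m(\lambda)| \le \bigl(\tfrac{d\rho_\phi}{d\lambda}(\lambda)\bigr)^{1/2} G_\psi(\lambda)$ for a.e.\ $\lambda$, where $\rho_\phi(B) := \|E_0(B)\phi\|^2$ and $\tfrac{d\rho_\phi}{d\lambda}$ is the density of the absolutely continuous part of $\rho_\phi$. I would start from the elementary estimate, valid for every Borel set $B$,
\[ |\mu(B)| = |\langle E_0(B)\phi, E_0(B)\psi\rangle| \le \rho_\phi(B)^{1/2}\Bigl(\int_B G_\psi(\lambda)^2\, d\lambda\Bigr)^{1/2}, \]
take $B = (\lambda-\delta,\lambda+\delta)$, divide by $|B| = 2\delta$, and let $\delta\to 0$: by the Lebesgue differentiation theorem $\mu(B)/|B|\to m(\lambda)$ and $\frac{1}{|B|}\int_B G_\psi^2\to G_\psi(\lambda)^2$ a.e., while $\rho_\phi(B)/|B|$ tends a.e.\ to $\tfrac{d\rho_\phi}{d\lambda}(\lambda)$, the singular part of $\rho_\phi$ contributing $0$ to the limit a.e. Squaring, integrating, and using $G_\psi\in L^\infty$ together with $\int_0^{2\pi}\tfrac{d\rho_\phi}{d\lambda}\,d\lambda \le \rho_\phi([0,2\pi)) = \|\phi\|^2$ gives $\int_0^{2\pi}|m|^2\,d\lambda \le \|\phi\|^2\sup_\lambda G_\psi(\lambda)^2 < \infty$; in particular $m\in L^2$, which legitimizes the Parseval step, and combining with the displayed identity finishes the proof.

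The step I expect to be the main obstacle is precisely this localized differentiation: nothing is assumed about $\phi$, so $\rho_\phi$ may carry a singular part, and the naive global bound $|\mu(B)|^2 \le \|\phi\|^2\int_B G_\psi^2$ is too lossy — dividing by $|B|$ and letting $\delta\to 0$ would leave a divergent factor $|B|^{-1/2}$ — so one genuinely needs the Cauchy--Schwarz inequality with both spectral measures of $\phi$ and $\psi$ differentiated simultaneously. Everything else (the spectral representation of $U_0^t$, absolute continuity of $\mu$, and Parseval) is routine. I note that this estimate is exactly a $U_0$-smoothness statement in the sense of Kato, so one could alternatively cite that machinery, but the direct computation above keeps the argument self-contained.
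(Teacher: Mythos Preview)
Your argument is correct, but the route differs from the paper's. The paper avoids Lebesgue differentiation entirely by working in the cyclic subspace of $\psi$: it introduces the isometry $\mathscr{U}:L^2\bigl([0,2\pi),G_\psi^2\,d\lambda\bigr)\to\mathcal{H}$, $\mathscr{U}f=f(H_0)\psi$, projects $\phi$ onto its range, and reads off the density explicitly as $m=\bar F\,G_\psi^2$ with $F=\mathscr{U}^{-1}\Pi\phi$. One then gets $\int|m|^2=\int|F|^2G_\psi^4\le\sup G_\psi^2\cdot\|F\|_{\mathcal{L}}^2=\sup G_\psi^2\cdot\|\Pi\phi\|^2$ directly, without ever differentiating $\rho_\phi$. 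Your approach instead stays at the level of the complex spectral measure $\mu$ and extracts the pointwise bound $|m|\le(d\rho_\phi/d\lambda)^{1/2}G_\psi$ by shrinking intervals; this is more measure-theoretic and sidesteps the cyclic-subspace machinery, at the cost of invoking the differentiation theorem for a general (possibly singular) finite measure $\rho_\phi$. The two bounds are in fact the same once one observes that $|F|^2G_\psi^2$ is the Lebesgue density of $\rho_{\Pi\phi}\le\rho_\phi$. Either way the Parseval step and the final estimate coincide; the paper's version yields an explicit formula for $m$, while yours is a self-contained Kato-smoothness computation that does not rely on identifying the cyclic model.
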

%%%%%%%%%%%%%%%%%%%%%%%%%%%%%%%
\begin{proof}
Let $\psi \in \mathcal{H}_{\rm ac,0}$
and $\mathcal{L} = L^2([0,2\pi), G^2_\psi(\lambda) d\lambda)$.
Let $H_0$ be the self-adjoint operator defined by
$\langle \xi, H_0 \eta \rangle 
= \int_0^{2\pi} \lambda d\langle \xi, E_0(\lambda) \eta \rangle$
($\xi, \eta \in \mathcal{H}$).
Let $\mathscr{U}:\mathcal{L} \to \mathcal{H}$ be an injection
defined by $\mathscr{U}f = f(H_0)\psi$ ($f \in \mathcal{L}$).
Then $\mathscr{U} 1 = \psi$ and $\mathscr{U} e^{it\lambda} = U_0^t\psi$
($t \in \mathbb{N}$).
We use $\Pi$ to denote the orthogonal projection onto $U\mathcal{L}$.
Let $\phi \in \mathcal{H}$ and $F = \mathscr{U}^{-1}\Pi\phi \in \mathcal{L}$.
Then we have
\begin{align*} 
\langle \phi, U_0^t\psi \rangle
	= \int_0^{2\pi}  e^{i t\lambda} \bar F(\lambda) G_\psi(\lambda)^2 d\lambda 
	= 2\pi \widehat{\bar F G_\psi^2}(t).
\end{align*}
Hence, by Parseval's identity, we obtain 
\begin{align*} 
& \sum_{t \in \mathbb{Z}} |\langle \phi, U_0^t \psi \rangle|^2
	%& = (2\pi)^2 \sum_{t \in \mathbb{Z}} |\widehat{\bar F G_\psi^2}(t)|^2 \\
	 = 2\pi \int_0^{2\pi}  |\bar F(\lambda) G_\psi(\lambda)^2|^2 d\lambda \\
	&\qquad \leq  2\pi \sup_{\lambda} G_\psi(\lambda)^2
		\int_0^{2\pi}  |\bar F(\lambda)|^2 G_\psi(\lambda)^2 d\lambda 
	 \leq  2\pi \sup_{\lambda} G_\psi(\lambda)^2 \|\Pi \phi\|^2.
\end{align*}
This completes the proof.
\end{proof}
%%%Lemma%%%%%%%%%%%%%%%%%%%%%%%%%%%%%%%%%%%%%%%%%%%%%%%%%%%
Let $W_t = U^{-t} U_0^t $.
\begin{lemma}
\label{lemslimonac}
{\rm
Let $t, s \in \mathbb{N}$ ($s\not=t$). Then,
$\slim_{r \to \infty} (W_t - W_s)U_0^r\Pi_{\rm ac}(U_0) = 0$.
}
\end{lemma}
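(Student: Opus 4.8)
The plan is to reduce everything to the estimate in Lemma \ref{lemac0}, using the fact that $W_t - W_s = U^{-t}U_0^t - U^{-s}U_0^s$ can be rewritten as a finite telescoping sum of terms each carrying one factor of $T = U - U_0$. Concretely, since $U^{-t}U_0^t - U^{-(t-1)}U_0^{t-1} = U^{-t}(U_0 - U)U_0^{t-1} = -U^{-t} T U_0^{t-1}$, summing from $s+1$ to $t$ (assume $t > s$ without loss of generality) gives
\begin{equation}
\label{eq_telescope_lemslimonac}
W_t - W_s = -\sum_{n=s+1}^{t} U^{-n} T U_0^{n-1}.
\end{equation}
Hence $(W_t - W_s)U_0^r \Pi_{\rm ac}(U_0) = -\sum_{n=s+1}^{t} U^{-n} T U_0^{n-1+r}\Pi_{\rm ac}(U_0)$, and since the sum is finite it suffices to show that for each fixed $n$, $U^{-n} T U_0^{m}\Pi_{\rm ac}(U_0) \to 0$ strongly as $m \to \infty$; equivalently, since $U^{-n}$ is unitary and bounded, that $\slim_{m\to\infty} T U_0^{m}\Pi_{\rm ac}(U_0) = 0$.

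For the latter I would first treat the dense set $\mathcal{H}_{\rm ac,0}$ supplied by Lemma \ref{lemdense}. Write $T = \sum_{i,x} t_i(x)^{1/2}\,|\sigma_{i,x}\rangle\langle \tau_{i,x}|$ from the singular value decomposition implicit in the proof of Lemma \ref{lemT1} (with $\sum_{i,x} t_i(x)^{1/2} = {\rm Tr}|T| < \infty$). Then for $\psi \in \mathcal{H}_{\rm ac,0}$,
\[
\|T U_0^m \psi\| \le \sum_{i,x} t_i(x)^{1/2} \,\bigl| \langle \tau_{i,x}, U_0^m \psi\rangle\bigr|.
\]
Each term $\langle \tau_{i,x}, U_0^m\psi\rangle$ tends to $0$ as $m\to\infty$ — this is the Riemann–Lebesgue lemma, since by Lemma \ref{lemac0} the sequence $(\langle \tau_{i,x}, U_0^m\psi\rangle)_{m\in\mathbb{Z}}$ is square-summable, hence its "Fourier coefficients" (in fact it \emph{is} a Fourier coefficient sequence of an $L^1$ function $\bar F G_\psi^2$) vanish at infinity. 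To pass the limit inside the sum I would invoke dominated convergence for series: each term is bounded by $t_i(x)^{1/2}\|\psi\|$ and $\sum_{i,x} t_i(x)^{1/2}\|\psi\| < \infty$. Therefore $\|T U_0^m\psi\| \to 0$ for all $\psi$ in the dense set $\mathcal{H}_{\rm ac,0}$.

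Finally I would remove the density restriction by a standard $\varepsilon/3$ argument: the operators $T U_0^m \Pi_{\rm ac}(U_0)$ are uniformly bounded (by $\|T\|$), they converge strongly to $0$ on the dense subspace $\mathcal{H}_{\rm ac,0} \subseteq \mathcal{H}_{\rm ac}$, and on $\mathcal{H}_{\rm ac}^\perp$ the operator $\Pi_{\rm ac}(U_0)$ kills everything, so uniform boundedness plus density gives $\slim_{m\to\infty} T U_0^m \Pi_{\rm ac}(U_0) = 0$ on all of $\mathcal{H}$. Combining this with \eqref{eq_telescope_lemslimonac} and the finiteness of the telescoping sum yields $\slim_{r\to\infty}(W_t - W_s)U_0^r\Pi_{\rm ac}(U_0) = 0$. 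The only genuinely delicate point is the interchange of limit and infinite sum in the singular value expansion of $T$; this is exactly where the trace-class hypothesis (as opposed to mere compactness) is used, via the summability $\sum_{i,x}t_i(x)^{1/2} < \infty$ established in Lemma \ref{lemT1}.
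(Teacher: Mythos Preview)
Your proof is correct. Both you and the paper start from the same telescoping identity
\[
W_t-W_s=-\sum_{n=s+1}^{t} U^{-n}TU_0^{n-1},
\]
but then diverge. The paper simply observes that this finite sum lies in $\mathscr{T}_1$ and is in particular \emph{compact}; it then introduces the self-adjoint generator $H_0$ of $U_0$, notes that $e^{irH_0}\Pi_{\rm ac}(H_0)\to 0$ weakly, and concludes via the standard fact that compact operators upgrade weak convergence to norm convergence. Your approach instead expands $T$ in its singular value decomposition and applies Lemma~\ref{lemac0} term by term, with dominated convergence supplied by the trace-class summability $\sum t_i(x)^{1/2}<\infty$.

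The paper's route is shorter and, notably, uses only the compactness of $W_t-W_s$, not the full trace norm. So your closing remark that the trace-class hypothesis is ``genuinely'' needed at this step is not quite right: compactness alone suffices here (one could replace your $\ell^1$ estimate by the $\ell^2$ identity $\|TU_0^m\psi\|^2=\sum s_n^2|\langle\tau_n,U_0^m\psi\rangle|^2$ and split into a tail controlled by $s_N\to 0$ plus a finite head). The full trace norm is only needed later, in Lemma~\ref{lemZ}. That said, your argument has the virtue of being self-contained, reusing Lemma~\ref{lemac0} directly rather than invoking an abstract compactness principle.
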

%%%%%%%%%%%%%%%%%%%%%%%%%%%%%%%%%%%%%%%%%%%%%%
\begin{proof}
For $t, s \in \mathbb{N}$ ($t>s$), we have
$W_t = \sum_{k=s+1}^t (W_k - W_{k-1}) + W_s$
and $W_k - W_{k-1} = U^{-k}(-T)U_0^{k-1}$,
where $T = U-U_0 \in \mathscr{T}_1$.
Since $\mathscr{T}_1$ is an ideal, we know that
\[ W_t -W_s = \sum_{k=s+1}^t U^{-k} (-T) U_0^{k-1}
	\in \mathscr{T}_1. \]
In particular, $W_t-W_s$ is compact.
Let $H_0$ be the self-adjoint operator defined in 
the proof of Lemma \ref{lemac0}.
Since $\wlim_{r \to \infty} e^{irH_0}\Pi_{\rm ac}(H_0) = 0$,
we have
\[ \slim_{r \to \infty} (W_t-W_s) U_0^r \Pi_{\rm ac}(U_0)
	= \slim_{r \to \infty} (W_t-W_s) e^{irH_0}\Pi_{\rm ac}(H_0) = 0. \]
This completes the proof.
\end{proof}	
%%%%%Proof%%%%%%%%%%%%%%%%%%%%%%%%%%%%%%%%%%%%%		
\begin{proof}[Proof of Proposition \ref{propWave}]
By Lemma \ref{lemdense},
it suffices to prove that, for $\psi \in \mathcal{H}_{\rm ac,0}$,
\[ \|(W_t - W_s)\psi\| \to 0, 
	\quad t,s \to \infty . \]
Because
\[ \|(W_t - W_s)\psi\|^2 
	= \langle \psi, W_t^*(W_t - W_s)\psi \rangle
		- \langle  \psi, W_s^*(W_t - W_s)\psi \rangle, \]
we need only to prove that
\[ \langle \psi, W_t^*(W_t - W_s)\psi \rangle \to 0, 
	\quad t,s \to \infty . \]
By direct calculation, we have, for $r > 1$,
\begin{align*}
& W_t^*(W_t - W_s) - U_0^{-r} W_t^* (W_t - W_s) U_0^r \\
& =U_0^{-r} W_t^* W_s U_0^r - W_t^* W_s  \\
& = \sum_{k=0}^{r-1}
	\left( U_0^{-k-1} W_t^* W_s U_0^{k+1} - U_0^{-k} W_t^* W_sU_0^k \right).
\end{align*}
Since
\[ U_0^{-k-1} W_t^* W_s U_0^{k+1} - U_0^{-k} W_t^* W_sU_0^k 
	= U_0^{-k-t-1}
		\left( TU^{t-s} - U^{t-s} T\right) U_0^{s+k}, \]
we obtain
\begin{align*}  
& W_t^*(W_t - W_s) - U_0^{-r} W_t^* (W_t - W_s) U_0^r \\
& = \sum_{k=0}^{r-1}
	U_0^{-k-t-1}
		\left( TU^{t-s} - U^{t-s} T\right) U_0^{s+k}.
\end{align*}
Since, by Lemma \ref{lemslimonac},
$\slim_{r \to \infty}U_0^{-r} W_t^* (W_t - W_s) U_0^r \psi= 0$,
we have
\begin{align*}
& W_t^*(W_t - W_s)\psi
	= \sum_{k=0}^{\infty}
	U_0^{-k-t-1}
		\left( TU^{t-s} - U^{t-s} T\right) U_0^{s+k}\psi \\
& = Z_{t,s} ((U_0 T) U^{t-s} - (U_0 U^{t-s}) T) \psi,
\end{align*}
where 
\[ Z_{t,s}(A) = \sum_{k=0}^\infty U_0^{-k-t} A U_0^{k+s}. \]
By Lemma \ref{lemZ} below, we know that
\begin{align*} 
|\langle \psi, W_t^*(W_t - W_s)\psi \rangle| 
	& \leq |\langle \psi, Z_{t,s}((U_0 T) U^{t-s}) \psi \rangle| \\
	& \quad +  |\langle \psi, Z_{t,s}(U_0 U^{t-s}) T) \psi \rangle|
		 	\to 0, \quad t,s \to \infty.
\end{align*}
This completes the proof.
\end{proof}
%%Lemma%%%%%%%%%%%%%%%%%%%%%%%%%%%%%%%%%%%%%%%%%%%
\begin{lemma}
\label{lemZ}
{\rm
Let $Y \in \mathscr{T}_1$ and $\{Q(t,s)\}$
be a family of bounded operators with 
$\sup_{t,s}\|Q(t,s)\| < \infty$. 
Then, for all $\psi \in \mathcal{H}_{\rm ac,0}$,
\begin{itemize} 
\item[(1)] $\lim_{t, s \to \infty} \left\langle  \psi, 
	Z_{t,s}(Y Q(t,s) )\psi \right\rangle = 0$;  
\item[(2)] $\lim_{t, s \to \infty} \left\langle  \psi, 
	Z_{t,s}(Q(t,s) Y)\psi \right\rangle = 0$. 
\end{itemize}
}
\end{lemma}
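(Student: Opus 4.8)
The plan is to reduce both assertions to the $\ell^2$-estimate of Lemma~\ref{lemac0} by inserting the Schmidt (singular value) decomposition of the trace class operator $Y$. Write $Y=\sum_{n}\mu_n\,|f_n\rangle\langle g_n|$ with $\mu_n\ge 0$, $\sum_n\mu_n={\rm Tr}|Y|<\infty$, and $\{f_n\}$, $\{g_n\}$ orthonormal systems in $\mathcal{H}$. For (1) one has $YQ(t,s)=\sum_n\mu_n\,|f_n\rangle\langle Q(t,s)^*g_n|$, so that, using unitarity of $U_0$ and $Z_{t,s}(A)=\sum_{k\ge0}U_0^{-k-t}AU_0^{k+s}$,
\[
\langle\psi,Z_{t,s}(YQ(t,s))\psi\rangle=\sum_{n}\mu_n\,a_n(t,s),\qquad a_n(t,s):=\sum_{k=0}^{\infty}\langle U_0^{k+t}\psi,f_n\rangle\,\langle Q(t,s)^*g_n,U_0^{k+s}\psi\rangle .
\]
The interchange of the $n$-sum with the inner product and with the $k$-sum will be justified a posteriori by the uniform bound obtained next, together with $\sum_n\mu_n<\infty$.

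Next I would estimate $a_n(t,s)$ by Cauchy--Schwarz in $k$:
\[
|a_n(t,s)|\le\Bigl(\sum_{k=0}^{\infty}|\langle f_n,U_0^{k+t}\psi\rangle|^2\Bigr)^{1/2}\Bigl(\sum_{k=0}^{\infty}|\langle Q(t,s)^*g_n,U_0^{k+s}\psi\rangle|^2\Bigr)^{1/2}.
\]
Each $k$-sum is part of a sum over all of $\mathbb{Z}$, so Lemma~\ref{lemac0}, applied with $\phi=f_n$ (resp. $\phi=Q(t,s)^*g_n$, and using $\|f_n\|=1$, $\|Q(t,s)^*g_n\|\le\sup_{t,s}\|Q(t,s)\|=:q$), bounds the first factor by $(2\pi)^{1/2}\sup_\lambda G_\psi(\lambda)$ and the second by $(2\pi)^{1/2}q\,\sup_\lambda G_\psi(\lambda)$. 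Since $G_\psi\in L^\infty$, this yields $|a_n(t,s)|\le M:=2\pi q\,\sup_\lambda G_\psi(\lambda)^2$ for all $n,t,s$; in particular every series above converges absolutely and the interchange of sums is legitimate. Moreover, for fixed $n$ the first factor equals $\bigl(\sum_{m\ge t}|\langle f_n,U_0^{m}\psi\rangle|^2\bigr)^{1/2}$, a tail of a convergent series, hence $\to0$ as $t\to\infty$; combined with the uniform bound on the second factor, $a_n(t,s)\to0$ as $t\to\infty$ uniformly in $s$. Splitting the $n$-sum at a large index $N$ (the tail $\sum_{n>N}\mu_n M$ is small and the finite head tends to $0$ as $t\to\infty$) gives $\sum_n\mu_n\,a_n(t,s)\to0$ as $t\to\infty$ uniformly in $s$, which proves (1).

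For (2), writing $Q(t,s)Y=\sum_n\mu_n\,|Q(t,s)f_n\rangle\langle g_n|$ one obtains $\langle\psi,Z_{t,s}(Q(t,s)Y)\psi\rangle=\sum_n\mu_n\,b_n(t,s)$ with $b_n(t,s):=\sum_{k\ge0}\langle Q(t,s)f_n,U_0^{k+t}\psi\rangle\,\langle g_n,U_0^{k+s}\psi\rangle$. The same Cauchy--Schwarz-plus-Lemma~\ref{lemac0} estimate (now with $\phi=Q(t,s)f_n$ and $\phi=g_n$) gives $|b_n(t,s)|\le M$, and this time it is the factor $\bigl(\sum_{m\ge s}|\langle g_n,U_0^m\psi\rangle|^2\bigr)^{1/2}$ that is a vanishing tail, so $b_n(t,s)\to0$ as $s\to\infty$ uniformly in $t$; the same splitting argument finishes the proof. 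Thus (2) is (1) with the roles of $t$ and $s$ exchanged.

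The only genuine obstacle is organizing the interchanges of summation and the passage to the limit inside the $n$-sum; all of this is controlled by the single estimate $\sum_{m\in\mathbb{Z}}|\langle\phi,U_0^m\psi\rangle|^2\le2\pi\|\phi\|^2\sup_\lambda G_\psi(\lambda)^2$ of Lemma~\ref{lemac0}, which simultaneously supplies absolute convergence, the majorant $M$ for the $n$-sum, and — via ``a convergent series has vanishing tails'' — the required pointwise-in-$n$ decay. Beyond this and the Schmidt decomposition of a trace class operator, nothing further is needed.
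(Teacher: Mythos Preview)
Your argument is correct and follows essentially the same route as the paper: insert the singular value decomposition of $Y$, apply Cauchy--Schwarz, bound the resulting $\ell^2$-sums by Lemma~\ref{lemac0}, and use that a convergent series has vanishing tails. The only cosmetic difference is that the paper applies Cauchy--Schwarz once over the joint $(n,k)$-sum (with weights $\mu_n$), obtaining a product $I_1(t)^{1/2}I_2(t,s)^{1/2}$ and then showing $I_1(t)\to0$, whereas you apply Cauchy--Schwarz in $k$ for each fixed $n$ and afterwards pass to the limit in the $n$-sum by a dominated-convergence/splitting argument; both organizations rest on the same estimate and yield the same conclusion.
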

%%%%%%%%%%%%%%%%%%%%%%%%%%%%%%%%%%%%%%%%%%%%%%%
\begin{proof}
Let $Y = \sum_{n=1}^\infty \lambda_n |\psi_n \rangle \langle \phi_n |$
be the canonical expansion of the compact operator $Y$.
Since $Y \in \mathscr{T}_1$, $\sum_n \lambda_n < \infty$. 
Then, by the Cauchy-Schwartz inequality, we have
\begin{align*}
|\left\langle  \psi, 
	Z_{t,s}(Y Q(t,s) )\psi \right\rangle| 
	%= \left|\left\langle  \psi, \sum_{k=0}^{\infty}
		%U_0^{-k-t} Y Q(t,s) U_0^{k+s}\psi \right\rangle \right| \\
& \leq \sum_{n=1}^{\infty} \sum_{k=0}^{\infty} 
	\lambda_n \left| \left\langle U_0^{k+t} \psi, \psi_n \right\rangle
		 \left\langle \phi_n,  Q(t,s) U_0^{k+s}\psi \right\rangle\right| \\
& \leq I_1(t,s)^{1/2} \times I_2(t,s)^{1/2},
\end{align*}
where 
\begin{align*}
& I_1(t) 
	= \sum_{n=1}^{\infty}  \sum_{k=0}^{\infty}
		\lambda_n |\left\langle \psi_n, U_0^{k+t} \psi \right\rangle |^2, \\
& I_2(t,s)
	= \sum_{n=1}^{\infty}  \sum_{k=0}^{\infty}
		\lambda_n  |\left\langle Q(t,s)^*\phi_n, U_0^{k+s}\psi \right\rangle|^2.
\end{align*}
By Lemma \ref{lemac0}, we have
\begin{align} 
%& \label{eql1} I_1(t)  
	%= \sum_{n=1}^\infty \lambda_n 
	%\sum_{k=t}^\infty |\langle\psi_n, U_0^k \psi\rangle|^2
%	\leq  2\pi \sup_\lambda G_\psi(\lambda)^2 \sum_n \lambda_n < \infty, \\
& \notag I_2(t,s)
	%= \sum_{n=1}^{\infty}  \sum_{k=0}^{\infty}
	%	\lambda_n  |\left\langle Q(t,s)^*\phi_n, U_0^{k+s}\psi \right\rangle|^2.
	\leq  2\pi \sup_\lambda G_\psi(\lambda)^2 
		\sup_{t,s}\|Q(t,s)\| \sum_n \lambda_n < \infty,
\end{align}
where we have used the fact that 
%$\psi_n$ and 
$\phi_n$ is a normalized vector.
Let $u_k= \sum_{n=1}^{\infty} 
		\lambda_n |\left\langle \psi_n, U_0^k \psi \right\rangle |^2$. 
Then, similarly to the above, we observe that
$\{u_k\} \in \ell^1(\mathbb{Z})$.
Hence, we have
\[ \lim_{t \to \infty} I_1(t) =  \lim_{t \to \infty}  \sum_{k=t}^\infty u_k = 0. \]
This proves (i).
The same proof works for (ii).	
\end{proof}

%%%Section%%%%%%%%%%%%%%%%%%%%%%%%%%%%%%%%%%%
\section{Asymptotic velocity}
%%%%%%%%%%%%%%%%%%%%%%%%%%%%%%%%%%%%%%%%%%%
\subsection{Proof of Theorem \ref{thm02}}
Let
\[ \mathcal{H}_0 
	= \bigcup_{m=0}^\infty 
		\{ \Psi \in \mathcal{H} \mid \Psi(x) = 0,~ |x| \geq m \} 
	%\quad \mbox{and} \quad 
	%	\mathcal{K}_0 = \mathscr{F} \mathcal{H}_0
	. \]
We use $\mathcal{D}$ to denote 
a subspace of vectors $\Psi \in \mathcal{H}$
whose Fourier transform $\hat \Psi$ are differentiable in $k$ 
with 
\[ \sup_{k \in [0,2\pi)} \left\| \frac{d}{dk} \hat \Psi (k) \right\| < \infty. \]
Note that $\mathcal{H}_0$ is a core for  $\hat x$,
%$\mathcal{H}_0 \subset \mathcal{D}$,
and so is $\mathcal{D}$.
Let $D = \mathscr{F} \hat x \mathscr{F}^{-1}$.
Then, by direct calculation, we know that
$(D \hat \Psi)(k) = i\frac{d}{dk} \hat \Psi(k)$ for $\Psi \in \mathcal{D}$.
We prove the following theorem:
%%Theorem%%%%%%%%%%%%%%%%%%%%%%%%%%%%%%%
\begin{theorem}
\label{thm_resol}
{\rm
Suppose that (A.2) holds.
Then,
\begin{equation} 
\label{Eq_resol}
\slim_{t \to \infty} \left( \frac{\hat x_0(t)}{t} - z \right)^{-1}
	 = (\hat v_0 - z)^{-1},
	\quad z \in \mathbb{C} \setminus \mathbb{R}. 
\end{equation}
 }
\end{theorem}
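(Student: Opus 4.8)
The plan is to pass to the Fourier picture, prove that $\hat x_0(t)/t$ converges \emph{strongly} to $\hat v_0$ on the core $\mathcal{D}$, and then read off \eqref{Eq_resol} from the standard criterion for strong resolvent convergence \cite[Theorem VIII.25(a)]{RSI}. Since $\lambda_j$ is periodic and analytic and $|\lambda_j(k)|=1$, the symbol $\sum_j(i\lambda_j'(k)/\lambda_j(k))|u_j(k)\rangle\langle u_j(k)|$ of $\hat v_0$ is bounded, so $\hat v_0$ is a bounded self-adjoint operator and any dense subspace, in particular $\mathcal{D}$, is a core for it. Thus \eqref{Eq_resol} will follow once we check that $\mathcal{D}\subset D(\hat x_0(t))$ for every $t\in\mathbb{N}$ and that $\|(\hat x_0(t)/t)\Psi-\hat v_0\Psi\|\to 0$ for each $\Psi\in\mathcal{D}$.

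First I would conjugate everything by $\mathscr{F}$, using $\mathscr{F}U_0^t\mathscr{F}^{-1}=\hat U_0(k)^t$ (a multiplication operator) and $D=\mathscr{F}\hat x\mathscr{F}^{-1}=i\,d/dk$. Writing the spectral decomposition $\hat U_0(k)^t=\sum_{j=1,2}\lambda_j(k)^t P_j(k)$ with $P_j(k)=|u_j(k)\rangle\langle u_j(k)|$ — a genuine orthogonal resolution of the identity for every $k$, even where $\lambda_1(k)=\lambda_2(k)$ — the analyticity of $\lambda_j$ together with (A.2) shows that $k\mapsto\hat U_0(k)^t\hat\Psi(k)$ is continuously differentiable with derivative bounded in $k$ whenever $\Psi\in\mathcal{D}$, hence lies in the domain of $D$; this gives $\mathcal{D}\subset D(\hat x_0(t))$ for all $t$. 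A Leibniz expansion then yields, for $\Psi\in\mathcal{D}$,
\[ \mathscr{F}\frac{\hat x_0(t)}{t}\mathscr{F}^{-1}\hat\Psi(k)
 = \frac{i}{t}\,\hat U_0(k)^{-t}\Bigl(\frac{d}{dk}\hat U_0(k)^t\Bigr)\hat\Psi(k)+\frac{1}{t}D\hat\Psi(k). \]

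Next I would differentiate $\hat U_0(k)^t=\sum_j\lambda_j^tP_j$ and use $P_l(k)P_j(k)=\delta_{lj}P_j(k)$: the term carrying the prefactor $t$ collapses to $t\sum_j\lambda_j'(k)\lambda_j(k)^{-1}P_j(k)$, so that $\frac{i}{t}$ times it is precisely the symbol $\sum_j(i\lambda_j'(k)/\lambda_j(k))P_j(k)$ of $\hat v_0$. What is left over is $\frac{i}{t}\sum_{l,j}\lambda_l(k)^{-t}\lambda_j(k)^t P_l(k)P_j'(k)+\frac{1}{t}D\hat\Psi(k)$; since $|\lambda_l^{-t}\lambda_j^t|=1$, since $\|P_j'(k)\|\le 2\|u_j'(k)\|_{\mathbb{C}^2}$ is bounded in $k$ by (A.2), and since $\sup_k\|\hat\Psi'(k)\|$ is finite by definition of $\mathcal{D}$, both leftover pieces have $\mathcal{K}$-norm of order $1/t$, uniformly. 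Hence $\|(\hat x_0(t)/t)\Psi-\hat v_0\Psi\|\le C(\Psi)/t\to 0$, which is exactly what the criterion needs.

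The only genuinely delicate points are bookkeeping ones: verifying that $\hat U_0(k)^t\hat\Psi(k)$ has an $L^2$-in-$k$ derivative, so that $\mathcal{D}$ really lies in $D(\hat x_0(t))$ for all $t$ — this is where analyticity of $\lambda_j$ and assumption (A.2) are both used — and handling the projections $P_j(k)$ uniformly in $k$, including at eigenvalue crossings, which is taken care of by working throughout with the orthonormal $C^1$ eigenbasis $\{u_j(k)\}$ postulated in (A.2). Everything else is the routine Leibniz computation above together with the cited abstract theorem.
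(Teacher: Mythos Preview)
Your proposal is correct and follows essentially the same route as the paper: both pass to the Fourier side, expand $\hat U_0(k)^{-t}D\,\hat U_0(k)^t\hat\Psi$ by the Leibniz rule, identify the leading term as the symbol of $\hat v_0$, and bound the remainder by $O(1/t)$ using (A.2). The only cosmetic difference is in the passage from strong convergence on $\mathcal{D}$ to resolvent convergence---you observe that $\hat v_0$ is bounded (hence $\mathcal{D}$ is automatically a core) and invoke \cite[Theorem~VIII.25(a)]{RSI}, whereas the paper uses the second resolvent identity together with the fact that $(\hat v_0-z)^{-1}$ leaves $\mathcal{D}$ invariant.
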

%%%%%%%%%%%%%%%%%%%%%%%%%%%%%
\begin{proof}
For all $\Psi \in \mathcal{H}$ and $\epsilon > 0$,
there exists a vector $ \Psi_\epsilon \in \mathcal{D}$
such that $\|\Psi - \Psi_\epsilon\| \leq \epsilon$.
Because, by the second resolvent identity,
\begin{align*}
& \left\| \left( \frac{\hat x_0(t)}{t} - z \right)^{-1}\Psi
	 - (\hat v_0 - z)^{-1} \Psi\right\| \\
&\qquad \leq \frac{2\epsilon}{|{\rm Im}z|}
	+ \left\| \left( \frac{\hat x_0(t)}{t} - z \right)^{-1} \Psi_\epsilon
	 - (\hat v_0 - z)^{-1}  \Psi_\epsilon\right\| \\
& \qquad  \leq \frac{2\epsilon}{|{\rm Im}z|}
	+ \frac{1}{|{\rm Im}z|} 
	\left\| \left( \hat v_0 - \frac{\hat x_0(t)}{t} \right) 
	  (\hat v_0 - z)^{-1}  \Psi_\epsilon\right\|, 
\end{align*}
it suffices to prove that
\begin{align*}
\lim_{t \to \infty} 
	\left\| \left( \hat v_0 - \frac{\hat x_0(t)}{t} \right) 
	  (\hat v_0 - z)^{-1}  \Psi\right\| = 0,
	  	\quad  \Psi \in \mathcal{D}.
\end{align*}
Note that
\[ (\hat v_0-z)^{-1} = \mathscr{F}^{-1}
	\left( \int_{[0,2\pi)}^\oplus dk \sum_{j=1,2}
		\left(\frac{ i \lambda_j^\prime(k) }{\lambda_j(k)}  -z\right)^{-1}
			|u_j(k) \rangle \langle u_j(k)|
	\right) \mathscr{F}. \]
Since $\lambda_j(k)$ is analytic and $|\lambda_j(k)|=1$,
we observe from (A.2) that 
$(\hat v_0 - z)^{-1}$ leaves $\mathcal{D}$ invariant.
Hence, we only need to prove that
\begin{align*}
\lim_{t \to \infty} 
	\left\| \left( \hat v_0 - \frac{\hat x_0(t)}{t} \right) 
	\Psi\right\| = 0,
	  	\quad \Psi \in \mathcal{D}.
\end{align*}
By direct calculation, we have
\begin{align*}
& \left\| \left( \hat v_0 - \frac{\hat x_0(t)}{t} \right) \Psi\right\|^2 \\
& \quad = \int_0^{2\pi} dk \left\| 
	\sum_{j=1,2} \left(\frac{i \lambda_j^\prime(k)}{\lambda_j(k)} \right)
		\langle u_j(k), \hat \Psi(k) \rangle u_j(k)
			- \hat U(k)^{-t} \frac{D}{t} \hat U(k)^t \hat \Psi(k)
		\right\|^2 \\
& \quad = \int_0^{2\pi} \frac{dk}{t^2} \left\| 
	\sum_{j=1,2} \lambda_j(k)^t \hat U(k)^{-t}
			\left(i \frac{d}{dk} \langle u_j(k), \hat \Psi(k) \rangle u_j(k) \right)	
		\right\|^2. 
\end{align*}
By the definition of $\mathcal{D}$ and (A.2), we know that
\[ \sup_{k \in [0,2\pi)}
	\left\|\left(i \frac{d}{dk} \langle u_j(k), \hat \Psi(k) \rangle u_j(k) \right)	
		\right\| < \infty. \]
Hence, we have 
\[ \left\| \left( \hat v_0 - \frac{\hat x_0(t)}{t} \right) \Psi\right\|
	= O(t^{-1}), \]
which completes the proof.
\end{proof}

%%%%%%%%%%%%%%%%%%%%%%%%%%%%%%%%%%%%%%%%%%%%%%%
\subsection{Proof of Theorem \ref{thm03}}
%%%%%%%%%%%%%%%%%%%%%%%%%%%%%%%%%%%%%%%%%%%%%%%%%%%
The proof falls naturally into two parts:
%%Theorem%%%%%%%%%%%%%%%%%%%%%%%%%%
\begin{theorem}
\label{thm_p}
{\rm
Let $U$ be a unitary operator on $\mathcal{H}$.
$\hat x(t) = U^{-t} \hat x U^t$ satisfies
\[ \slim_{t \to \infty}  {\rm exp}\left(i \xi \frac{\hat x(t)}{t} \right) \Pi_{\rm p}(U) 
	= \Pi_{\rm p}(U),
	\quad \xi \in \mathbb{R}. \]
}
\end{theorem}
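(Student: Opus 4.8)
The plan is to reduce the assertion to an elementary computation on a single eigenvector of $U$, and then to propagate it to all of ${\rm Ran}\,\Pi_{\rm p}(U)$ by density. Finite linear combinations of eigenvectors of $U$ are dense in ${\rm Ran}\,\Pi_{\rm p}(U)$ by the definition of $\Pi_{\rm p}(U)$, and each ${\rm exp}(i\xi\hat x(t)/t)$ is unitary, so $\|{\rm exp}(i\xi\hat x(t)/t)-I\|\le 2$ uniformly in $t$. Hence, given $\Psi\in{\rm Ran}\,\Pi_{\rm p}(U)$ and $\varepsilon>0$, choosing a finite combination $\Phi=\sum_{j=1}^N c_j\phi_j$ of eigenvectors with $\|\Psi-\Phi\|<\varepsilon$ and using linearity of ${\rm exp}(i\xi\hat x(t)/t)$ yields
\[ \bigl\|({\rm exp}(i\xi\hat x(t)/t)-I)\Psi\bigr\|
	\le 2\varepsilon + \sum_{j=1}^N |c_j|\,\bigl\|({\rm exp}(i\xi\hat x(t)/t)-I)\phi_j\bigr\|, \]
so that, once each term in the sum is shown to vanish as $t\to\infty$, letting $\varepsilon\to 0$ finishes the proof. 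Thus it is enough to prove $\|({\rm exp}(i\xi\hat x(t)/t)-I)\phi\|\to 0$ for each eigenvector $\phi$.

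Fix $\phi$ with $U\phi=e^{i\theta}\phi$, $\theta\in\mathbb{R}$. The point is that, since $\hat x(t)=U^{-t}\hat x U^t$, the functional calculus intertwines unitary conjugation and gives ${\rm exp}(i\xi\hat x(t)/t)=U^{-t}{\rm exp}(i\xi\hat x/t)U^t$ as a bounded operator, while on the eigenvector $\phi$ one has $U^t\phi=e^{it\theta}\phi$ and hence $\phi=e^{it\theta}U^{-t}\phi$. Therefore
\begin{align*}
{\rm exp}\bigl(i\xi\tfrac{\hat x(t)}{t}\bigr)\phi-\phi
	&= U^{-t}{\rm exp}(i\xi\hat x/t)\,e^{it\theta}\phi - e^{it\theta}U^{-t}\phi \\
	&= e^{it\theta}\,U^{-t}\bigl({\rm exp}(i\xi\hat x/t)-I\bigr)\phi,
\end{align*}
and since $U^{-t}$ is unitary this produces the identity $\|({\rm exp}(i\xi\hat x(t)/t)-I)\phi\|=\|({\rm exp}(i\xi\hat x/t)-I)\phi\|$, in which $U$ no longer appears. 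This is the heart of the matter: conjugation by $U^t$ acts as a harmless phase on an eigenvector, so the statement requires no decay or domain hypothesis on the eigenfunctions beyond $\phi\in\mathcal{H}$, and $\hat x(t)$ is never applied directly.

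It remains to show $\|({\rm exp}(i\xi\hat x/t)-I)\phi\|\to 0$. Since ${\rm exp}(i\xi\hat x/t)$ is the multiplication operator by $e^{i\xi x/t}$ on $\ell^2(\mathbb{Z};\mathbb{C}^2)$,
\[ \bigl\|({\rm exp}(i\xi\hat x/t)-I)\phi\bigr\|^2
	= \sum_{x\in\mathbb{Z}} \bigl|e^{i\xi x/t}-1\bigr|^2\,\|\phi(x)\|_{\mathbb{C}^2}^2, \]
and for each fixed $x$ the summand tends to $0$ as $t\to\infty$ while being dominated by the summable sequence $4\|\phi(x)\|_{\mathbb{C}^2}^2$; dominated convergence (equivalently, splitting the sum at a large radius $R$ and using $\sum_{|x|>R}\|\phi(x)\|_{\mathbb{C}^2}^2<\varepsilon$) gives the claim, and combined with the reduction of the first paragraph this completes the proof. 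I do not expect a genuine obstacle here; the only thing to keep straight is that although $\hat x(t)$ is unbounded, ${\rm exp}(i\xi\hat x(t)/t)$ is a bona fide unitary on all of $\mathcal{H}$, so no domain issues intervene, and the $U$-dependence is eliminated entirely by the phase identity above.
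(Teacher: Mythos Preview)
Your proof is correct and follows essentially the same approach as the paper: approximate an arbitrary $\Psi\in{\rm Ran}\,\Pi_{\rm p}(U)$ by a finite linear combination of eigenvectors using the uniform bound $\|{\rm exp}(i\xi\hat x(t)/t)-I\|\le 2$, use the intertwining ${\rm exp}(i\xi\hat x(t)/t)=U^{-t}{\rm exp}(i\xi\hat x/t)U^t$ together with the fact that $U^t$ acts as a phase on each eigenvector, and conclude by dominated convergence on $\sum_x|e^{i\xi x/t}-1|^2\|\phi(x)\|_{\mathbb{C}^2}^2$. The only cosmetic difference is that you isolate the single-eigenvector computation first, whereas the paper carries the finite sum through to the end before invoking dominated convergence.
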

%%Theorem%%%%%%%%%%%%%%%%%%%%%%%%%%
\begin{theorem}
\label{thm_ac}
{\rm
Let $U=SC$ and $U_0 = SC_0$ satisfy (A.1) and (A.2).
Then,
\[ \slim_{t \to \infty}
	{\rm exp}\left( i\xi \frac{\hat x(t)}{t} \right)\Pi_{\rm ac}(U)
		=  {\rm exp}(i \xi \hat v_+)\Pi_{\rm ac}(U),
			\quad \xi \in \mathbb{R}. \]
}
\end{theorem}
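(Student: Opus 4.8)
The plan is to reduce the position-dependent statement to the position-independent one (Theorem~\ref{thm02}) by conjugating with the wave operator $W_+$ and controlling the error via the Heisenberg commutator identity together with the trace-class decay of $U-U_0$. Concretely, I would first record the operator identity
\[
\hat x(t) = U^{-t}\hat x U^t
= W_t^{*}\,U_0^{-t}\hat x U_0^t\,W_t
+ \bigl(\text{correction terms}\bigr),
\]
where $W_t = U^{-t}U_0^t$; the correction arises because $W_t$ does not commute with $\hat x$. The key computation is to estimate $[\hat x, W_t]$. Writing, as in the proof of Proposition~\ref{propWave}, $W_t = I - \sum_{k=1}^{t} U^{-k} T U_0^{k-1}$ with $T = U-U_0 \in \mathscr{T}_1$, one has the telescoped expression for $\hat x W_t - W_t \hat x$ in terms of $\sum_{k=1}^t U^{-k}\,[\hat x \text{ or commutators thereof}]\, T\, U_0^{k-1}$. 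The point is that $[\hat x, S]$ and $[\hat x, C_0]$ are bounded (the shift moves a walker by one site), so the commutator of $\hat x$ with $U^{-k}$ and $U_0^{k-1}$ grows only linearly in $k$, while the decay \eqref{Eq_decay} makes $\sum_x |x|\,\|T(x)\| < \infty$, i.e. $\hat x T$ and $T \hat x$ are still trace class (this is exactly where the $|x|^{-1-\epsilon}$ rate, rather than merely $|x|^{-\epsilon}$, is used). Hence $[\hat x, W_t] = O(t)$ in a suitable sense, so that $\hat x(t)/t$ and $W_t^{*}(\hat x_0(t)/t) W_t$ differ by a term that is $O(1/t)\cdot(\text{trace class})$, which vanishes strongly when applied to $\Pi_{\rm ac}(U)$ after using $\slim_{t}W_t^{*}\Pi_{\rm ac}(U) = W_+^{*}$ (Theorem~\ref{thm01}) and the fact that trace-class operators times $U_0^{k}\Pi_{\rm ac}(U_0)$ decay, exactly as in Lemma~\ref{lemslimonac}.

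Granting that reduction, the core limit follows by combining three ingredients: (a) Theorem~\ref{thm02}, which gives $\slim_t \exp(i\xi \hat x_0(t)/t) = \exp(i\xi\hat v_0)$; (b) the completeness and intertwining $W_+^{*}\Pi_{\rm ac}(U) = \slim_t U_0^{-t}U^t\Pi_{\rm ac}(U)$, $U_0 W_+^{*} = W_+^{*} U$; and (c) the definition $\hat v_+ = W_+\hat v_0 W_+^{*}$, which by the functional calculus gives $\exp(i\xi\hat v_+) = W_+\exp(i\xi\hat v_0)W_+^{*}$ on $\mathrm{Ran}\,W_+ = \mathrm{Ran}\,\Pi_{\rm ac}(U)$. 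Assembling these, for $\Psi \in \mathrm{Ran}\,\Pi_{\rm ac}(U)$,
\[
\exp\!\Bigl(i\xi\tfrac{\hat x(t)}{t}\Bigr)\Psi
\;=\; W_t^{*}\exp\!\Bigl(i\xi\tfrac{\hat x_0(t)}{t}\Bigr)W_t\Psi
+ o(1)
\;\longrightarrow\; W_+^{*}\exp(i\xi\hat v_0)W_+\Psi
\;=\; \exp(i\xi\hat v_+)\Psi,
\]
where the middle step uses that $W_t\Psi \to W_+\Psi$, that $\exp(i\xi\hat x_0(t)/t)$ is uniformly bounded, and the strong convergence of $\exp(i\xi\hat x_0(t)/t)$ on the (absolutely continuous) vector $W_+\Psi \in \mathrm{Ran}\,\Pi_{\rm ac}(U_0)$.

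The main obstacle is step one: making precise and rigorous the claim that conjugation of the Heisenberg position operator by $W_t$ introduces only an $o(1)$ error on $\Pi_{\rm ac}(U)$. The difficulty is that $\hat x$ is unbounded, so $\exp(i\xi\hat x(t)/t)$ versus $W_t^{*}\exp(i\xi\hat x_0(t)/t)W_t$ cannot be compared by a naive operator-norm bound; I would instead work with the bounded unitaries $\exp(i\xi\hat x(t)/t)$ directly, establishing a Duhamel-type identity
\[
\exp\!\Bigl(i\xi\tfrac{\hat x(t)}{t}\Bigr)W_t - W_t\exp\!\Bigl(i\xi\tfrac{\hat x_0(t)}{t}\Bigr)
= \frac{i\xi}{t}\int_0^1 \exp\!\Bigl(is\xi\tfrac{\hat x(t)}{t}\Bigr)\,[\hat x(t),W_t]\,\exp\!\Bigl(i(1-s)\xi\tfrac{\hat x_0(t)}{t}\Bigr)\,ds,
\]
and then bounding $\tfrac1t[\hat x(t),W_t] = \tfrac1t U^{-t}[\hat x,W_t]U^t$ together with $[\hat x, W_t] = \sum_k U^{-k}(\,\cdots\,)U_0^{k-1}$ where each summand is trace class with summable trace norms, uniformly in $t$, by the decay hypothesis. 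Once that trace-norm bound is in hand, strong convergence to zero on $\Pi_{\rm ac}$ follows from the RAGE-type argument already used in Lemmas~\ref{lemac0}--\ref{lemZ}. Everything else is bookkeeping with the functional calculus and the intertwining relations.
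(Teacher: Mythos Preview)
Your proposal contains a genuine error that makes the argument much harder than it needs to be. You write $\hat x(t) = W_t^{*}\,\hat x_0(t)\,W_t + (\text{correction terms})$, and then devote most of the proposal to controlling those corrections via a Duhamel formula and trace-class commutator estimates on $[\hat x, W_t]$. But there are \emph{no} correction terms if you conjugate in the right order: since $W_t = U^{-t}U_0^t$ is unitary, one has exactly
\[
W_t\,\hat x_0(t)\,W_t^{*} \;=\; (U^{-t}U_0^{t})(U_0^{-t}\hat x\,U_0^{t})(U_0^{-t}U^{t}) \;=\; U^{-t}\hat x\,U^{t} \;=\; \hat x(t),
\]
and hence, by the functional calculus, $\exp\bigl(i\xi\,\hat x(t)/t\bigr) = W_t\,\exp\bigl(i\xi\,\hat x_0(t)/t\bigr)\,W_t^{*}$ \emph{identically}. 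Your swap of $W_t$ and $W_t^{*}$ is also what makes your last displayed line false: with $\hat v_+ = W_+ \hat v_0 W_+^{*}$ one has $\exp(i\xi\hat v_+) = W_+\exp(i\xi\hat v_0)W_+^{*}$ on $\mathrm{Ran}\,\Pi_{\rm ac}(U)$, not $W_+^{*}\exp(i\xi\hat v_0)W_+$; and it is $W_t^{*}\Pi_{\rm ac}(U)$ that converges strongly (to $W_+^{*}$, by completeness), not $W_t$ applied to vectors in $\mathcal{H}_{\rm ac}(U)$.

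Once you use the exact identity, the proof is a clean three-term telescope, exactly as in the paper: write
\[
\exp\!\Bigl(i\xi\tfrac{\hat x(t)}{t}\Bigr)\Pi_{\rm ac}(U) - \exp(i\xi\hat v_+)\Pi_{\rm ac}(U)
= W_t\,\exp\!\Bigl(i\xi\tfrac{\hat x_0(t)}{t}\Bigr)W_t^{*}\Pi_{\rm ac}(U)
 - W_+\exp(i\xi\hat v_0)W_+^{*}\Pi_{\rm ac}(U),
\]
and split the right-hand side as $I_1+I_2+I_3$ by successively replacing $W_t^{*}\to W_+^{*}$, then $\exp(i\xi\hat x_0(t)/t)\to\exp(i\xi\hat v_0)$, then $W_t\to W_+$. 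The first and second pieces go to zero strongly by the completeness of $W_+$ (Theorem~\ref{thm01}) and by Theorem~\ref{thm02}, using only uniform boundedness of the remaining factors. For the third piece one needs $(W_t - W_+)\Pi_{\rm ac}(U_0)\to 0$; since $\mathrm{Ran}\,W_+^{*}=\mathcal{H}_{\rm ac}(U_0)$, this reduces to inserting $\Pi_{\rm ac}(U_0)$, which commutes with $\exp(i\xi\hat v_0)$ by Lemma~\ref{lem_com}. No commutator bounds on $[\hat x,W_t]$, no Duhamel identity, and no RAGE-type estimate are needed.
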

%%%%%%%%%%%%%%%%%%%%%%%%%%%%%
%Before proving the above theorems,
%we prove Theorem \ref{thm03}:
\begin{proof}[Proof of Theorem \ref{thm03}]
By (A.3), we have
\begin{align*} 
\slim_{t \to \infty}
	{\rm exp}\left( i\xi \frac{\hat x(t)}{t} \right)
& = \slim_{t \to \infty}
	{\rm exp}\left( i\xi \frac{\hat x(t)}{t} \right)
		(\Pi_{\rm p}(U) + \Pi_{\rm p}(U)) \\
& = \Pi_{\rm p}(U) +  {\rm exp}(i \xi \hat v_+)\Pi_{\rm ac}(U).			
\end{align*}
This prove the theorem.
\end{proof}
%%%%%%%%%%%%%%%%%%%%%%%%%%%%%%%%%%%%%%%%%%
It remains to prove Theorems \ref{thm_p} and \ref{thm_ac}.
%%%%%%%%%%%%%%%%%%%%%%%%%%%%%%%%%%%%%%%%%
\begin{proof}[Proof of Theorem \ref{thm_p}]
Let $\mathcal{H}_{\rm p}(U)$ be the direct sum of 
all eigenspaces of $U$. 
It suffices to prove that, for $\Psi \in \mathcal{H}_{\rm p}(U)$,
\[ \slim_{t \to \infty}  {\rm exp}\left(i \xi \frac{\hat x(t)}{t} \right) \Psi
	=\Psi . \]
Let $\lambda_n$ be the eigenvalues of $U$ and
take an ONB $\{\eta_n\}_{n=1}^\infty$ of $\mathcal{H}_{\rm p}$
such that $U\eta_n = \lambda_n \eta_n$.
We have $\Pi_{\rm p}(U) = \sum_n |\eta_n \rangle \langle \eta_n|$.
Let $\epsilon >0$. For sufficiently large $N$,
$\Psi_N = \sum_{n=1}^N \langle \eta_n, \Psi \rangle \eta_n$
satisfies $\|\Psi - \Psi_N\| \leq \epsilon$. 
Then,
\begin{align*}
\left\|{\rm exp}\left( i \xi \frac{\hat x(t)}{t} \right)\Psi - \Psi \right\|
\leq 2 \epsilon 
	+ \left\|{\rm exp}\left( i \xi \frac{\hat x(t)}{t} \right)\Psi_N - \Psi_N \right\|.
\end{align*}
By direct calculation, we have
\begin{align}
& \notag
\left\|{\rm exp}\left( i \xi \frac{\hat x(t)}{t} \right)\Psi_N - \Psi_N \right\| 
 = \left\| \left( {\rm exp}\left( i \xi \frac{\hat x}{t} \right) -1 \right) 
	U^t \Psi_N\right\| \\
& \notag
\qquad = \left\| \sum_{n=1}^N \lambda_n^t \langle \eta_n, \Psi \rangle
	\left( {\rm exp}\left( i \xi \frac{\hat x}{t} \right) -1 \right) 
	  \eta_n \right\| \\
& \label{eq23} 
\qquad \leq \sum_{n=1}^N |\langle \eta_n, \Psi \rangle|
	 \left\| \left( {\rm exp}\left( i \xi \frac{\hat x}{t} \right) -1\right) 
	  \eta_n \right\|.
\end{align}
Since $\lim_{t \to \infty} |1-e^{i \xi x/t}|=0$,
$|1-e^{i \xi x/t}| \leq 2$ and 
$\sum_{x} \|\eta_n(x)\|_{\mathbb{C}^2}^2 = \|\eta_n\|^2 < \infty$,
we have
\begin{align*} 
\lim_{t \to \infty} \left\| \left( {\rm exp}\left( i \xi \frac{\hat x}{t} \right) -1\right) 
	  \eta_n \right\|^2
= \lim_{t \to \infty} \sum_{x \in \mathbb{Z}} |e^{i \xi x/t}-1|^2 \|\eta_n(x)\|_{\mathbb{C}^2}^2 = 0,
\end{align*}
which, combined with \eqref{eq23}, completes the proof.
\end{proof}
%%%%%%%%%%%%%%%%%%%%%%%%%%%%%%%%%%%%%%%%%%%
\begin{lemma}
\label{lem_com}
{\rm
$[U_0, {\rm exp}(i \xi \hat v_0)] = 0$.
}
\end{lemma}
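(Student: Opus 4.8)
The plan is to work entirely on the Fourier side. Recall that $\mathscr{F}$ conjugates $U_0$ into the fibered multiplication operator $\hat U_0(k) = \mathrm{diag}(e^{ik}, e^{-ik})C_0$, and conjugates $\hat v_0$ into the fibered operator whose $k$-fiber is $M(k) := \sum_{j=1,2}\bigl(i\lambda_j'(k)/\lambda_j(k)\bigr)\,|u_j(k)\rangle\langle u_j(k)|$. Since both $U_0$ and $\hat v_0$ are (under $\mathscr{F}$) decomposable operators acting fiberwise, it suffices to show that the two fiber matrices commute for almost every $k$, i.e. $[\hat U_0(k), M(k)] = 0$ on $\mathbb{C}^2$. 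Then $\hat U_0(k)$ commutes with $f(M(k))$ for every bounded Borel $f$, in particular with $\exp(i\xi M(k))$, and applying $\mathscr{F}^{-1}$ gives $[U_0, \exp(i\xi\hat v_0)] = 0$.

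The fiberwise commutation is immediate from the spectral decomposition of $\hat U_0(k)$. First I would treat the generic case in which $\hat U_0(k)$ has two distinct eigenvalues $\lambda_1(k)\neq\lambda_2(k)$: then $\{u_1(k), u_2(k)\}$ is an orthonormal eigenbasis, so $\hat U_0(k) = \sum_j \lambda_j(k)\,|u_j(k)\rangle\langle u_j(k)|$ and $M(k) = \sum_j \mu_j(k)\,|u_j(k)\rangle\langle u_j(k)|$ with $\mu_j(k) = i\lambda_j'(k)/\lambda_j(k)$ are simultaneously diagonalized in the same basis, hence commute. When $\lambda_1(k) = \lambda_2(k)$, $\hat U_0(k)$ is a scalar multiple of the identity (a unitary $2\times2$ matrix with a single eigenvalue of multiplicity two), so it commutes with everything, in particular with $M(k)$; here one should just note that the set of such degenerate $k$ is closed with empty interior (the $\lambda_j$ are analytic and not identically equal, unless $C_0$ is of the special form in Example 2.3(i), in which case $M(k)\equiv 0$ and the statement is trivial), so it does not affect the a.e. identification of decomposable operators. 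Either way $[\hat U_0(k), M(k)] = 0$ for a.e. $k$.

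I do not expect any real obstacle here; the only point requiring a modicum of care is the handling of the eigenvalue-crossing set, and the observation that a bounded decomposable operator is determined by its fibers a.e., so that commutation of the fibers upgrades to commutation of the operators. An alternative, coordinate-free phrasing that avoids case distinctions altogether: $M(k)$ is a polynomial (indeed a Borel function) of $\hat U_0(k)$ — explicitly, on the eigenspace for $\lambda_j(k)$ it acts as multiplication by $i\lambda_j'(k)/\lambda_j(k)$, and since $\lambda_j$ is analytic one may even write $i\lambda_j'(k)/\lambda_j(k) = g_k(\lambda_j(k))$ for a suitable function $g_k$ on the spectrum — hence $M(k)$ commutes with $\hat U_0(k)$ trivially as a function of it. Conjugating back by $\mathscr{F}^{-1}$ then gives $[U_0,\hat v_0]=0$ in the strong sense, and by the functional calculus $[U_0,\exp(i\xi\hat v_0)]=0$, which is exactly the claim.
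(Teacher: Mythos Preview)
Your argument is correct but follows a genuinely different route from the paper. The paper proves the lemma \emph{dynamically}, using Theorem~\ref{thm02}: from $U_0^{-1}\hat x_0(t)U_0=\hat x_0(t+1)$ one gets
\[
[U_0,\exp(i\xi\hat v_0)]
=\slim_{t\to\infty}\,[U_0,\exp(i\xi\hat x_0(t)/t)]
=\slim_{t\to\infty}U_0\bigl\{\exp(i\xi\hat x_0(t)/t)-\exp(i\xi\hat x_0(t+1)/t)\bigr\}=0,
\]
since both terms inside the braces converge strongly to $\exp(i\xi\hat v_0)$. Your approach instead works statically on the Fourier side: both $\mathscr{F}U_0\mathscr{F}^{-1}$ and $\mathscr{F}\hat v_0\mathscr{F}^{-1}$ are decomposable with fibers simultaneously diagonal in the eigenbasis $\{u_j(k)\}$ of $\hat U_0(k)$, hence commute fiberwise. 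Your route is more elementary and logically independent of Theorem~\ref{thm02} (it needs only differentiability of the $\lambda_j$, not assumption (A.2) on the eigenvectors), whereas the paper's argument is coordinate-free and would transfer verbatim to any asymptotic velocity arising as such a strong limit. One small remark: your ``alternative, coordinate-free phrasing'' that $M(k)$ is a function of $\hat U_0(k)$ breaks down precisely at degenerate $k$ (there $\hat U_0(k)$ is scalar while $M(k)$ need not be), so it does not actually avoid the case distinction; since your main argument already handles that case, this does not affect correctness.
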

%%%%%%%%%%%%%%%%%%%%%%%%%%%%
\begin{proof}
By direct calculation, we have
\begin{align*}
[ U_0, {\rm exp}(i \xi \hat v_0)]
	& = \slim_{t \to \infty} 
		\left[ U_0, {\rm exp}\left(i \xi \frac{\hat x_0(t)}{t}\right)\right] \\
	& = \slim_{t \to \infty} U_0 
		\left\{ {\rm exp}\left(i \xi \frac{\hat x_0(t)}{t}\right)
					- {\rm exp}\left(i \xi \frac{\hat x_0(t+1)}{t}\right) \right\} = 0. 
\end{align*}
\end{proof}
%%%%%%%%%%%%%%%%%%%%%%%%%%%%%%%%%%%%%%%%%%%%%%
\begin{proof}[Proof of Theorem \ref{thm_ac}]
By (A.1) and (A.2), Theorems \ref{thm01} and \ref{thm02} hold.
Then, $W_+$ is a unitary operator from $\mathcal{H}_{\rm ac}(U_0)$
to $\mathcal{H}_{\rm ac}(U)$.
Hence, we have
\[ {\rm exp}(i \xi \hat v_+) \Pi_{\rm ac}(U)
	= W_+ {\rm exp}(i \xi \hat v_0) W_+^*\Pi_{\rm ac}(U). \]
By direct calculation, we observe that
\begin{align*}
I(t) &:=  {\rm exp}\left( i\xi \frac{\hat x(t)}{t} \right)\Pi_{\rm ac}(U)
		 -  {\rm exp}(i \xi \hat v_+)\Pi_{\rm ac}(U) \\
& = W_t {\rm exp}\left( i\xi \frac{\hat x_0(t)}{t} \right) 
		W_t^* \Pi_{\rm ac}(U)
		 - W_+ {\rm exp}(i \xi \hat v_0) W_+^*\Pi_{\rm ac}(U) \\
& =: \sum_{j=1}^3 I_j(t),  
\end{align*}
where
\begin{align*}
& I_1(t) = W_t {\rm exp}\left( i\xi \frac{\hat x_0(t)}{t} \right) 
		\left(W_t^* - W_+^* \right)\Pi_{\rm ac}(U), \\
& I_2(t) = W_t\left( {\rm exp}\left( i\xi \frac{\hat x_0(t)}{t} \right) 
		-  {\rm exp}(i \xi \hat v_0) \right)
		W_+^*  \Pi_{\rm ac}(U), \\
& I_3(t) = \left( W_t - W_+ \right) 
		 {\rm exp}(i \xi \hat v_0) 
		W_+^*  \Pi_{\rm ac}(U).
\end{align*}
Because $W_t$ and ${\rm exp}(i \xi \hat x_0(t)/t)$ are uniformly bounded,
we know from Theorems \ref{thm01} and \ref{thm02}
that $\slim_{t \to \infty}I_1(t) = \slim_{t \to \infty}I_2(t) = 0$.
Hence, we have
\begin{align*} 
I(t) & = \left( W_t - W_+ \right) 
		 {\rm exp}(i \xi \hat v_0) 
		W_+^*  \Pi_{\rm ac}(U) + o(1) \\
	& =  \left( W_t - W_+ \right) 
		\Pi_{\rm ac}(U_0) {\rm exp}(i \xi \hat v_0) 
			W_+^*  \Pi_{\rm ac}(U) \\
	& \quad + \left( W_t - W_+ \right) 
			[ {\rm exp}(i \xi \hat v_0), \Pi_{\rm ac}(U_0)]
			W_+^*  \Pi_{\rm ac}(U) + o(1),
\end{align*}
where we have used the fact that ${\rm Ran} W_+^* 
=\mathcal{H}_{\rm ac}(U_0)$.
Since, by Lemma \ref{lem_com},
$[ {\rm exp}(i \xi \hat v_0), \Pi_{\rm ac}(U_0)]=0$,
we obtain from Theorem \ref{thm01}, that
$\slim_{t \to \infty} I(t)=0$.
This completes the proof.
\end{proof}

%%%%%ACKNOWLEDGMENT%%%%%%%%%%%%%%%%%%%%%%%%%
{\bf Acknowledgement}
This work was supported by Grant-in-Aid for Young Scientists (B) (No. 26800054).
%%%%%%%%%%%%%%%%%%%%%%%%%%%%%%%%%%%%%%%%%%%%%%

\end{document}